\newtheorem{theorem}{Theorem}
\theoremstyle{plain}
\newtheorem{corollary}{Corollary}
\newtheorem{lemma}{Lemma}
\newtheorem{proposition}{Proposition}
\newtheorem{remark}{Remark}
\numberwithin{equation}{section}
\begin{document}
\title[Fuzzification in AG-groupoids]{Fuzzy Abel Grassmann's Groupoids}
\author{Madad Khan and M. Nouman Aslam Khan}
\address{COMSATS Institute of Information Technology, Abbottabad.}
\email{madadmath@yahoo.com, mailnouman@gmail.com}
\subjclass[2000]{20M10, 20N990}
\keywords{Fuzzy ideal, AG-groupoid, Fuzzy left ideal generated by fuzzy
points}

\begin{abstract}
In the present paper we have studied the concept of fuzzification in
AG-groupoids. The equivalent statement for an AG-groupoid to be a
commutative semigroup is proved. Fuzzy points have been defined in an
AG-groupoid and has been shown the representation of smallest fuzzy left
ideal generated by a fuzzy point. The set of all fuzzy left ideals, which
are idempotents, forms a commutative monoid. The relation of fuzzy
left(right) ideals, fuzzy interior ideals and fuzzy bi-ideals in AG-groupoid
has been studied. Necessary and sufficient condition of fully fuzzy prime
AG-groupoid has been shown. Further, It has been shown that the set of fuzzy
quasi-prime ideals of AG-groupoid with left identity forms a semillattice
structure. Moreover, equivalent statements for fuzzy semiprime left ideal in
an AG-groupoid have been proved.
\end{abstract}

\maketitle

\section{\textbf{Introduction}}

A fuzzy subset $f$ of a set $S$ is a function from $S$ to a closed interval $%
[0,1].$ The concept of a fuzzy subset of a set was first considered by L.A.
Zadeh [10] in 1965. A. Rosenfelt was the first who studied fuzzy sets in the
structure of groups [11]. Fuzzy semigroups were first considered by N.
Kuroki [12] in which he studied the bi-ideals in semigroups. The fuzzy
theory provides the underlying structure for the generalization of many
fields including logic, differential equations and groups. The fuzzy theory
on algebraic structures have been widley explored. An Abel Grassmann's
groupoid, abbreviated as AG-groupoid, is a groupoid $S$ whose elements
satisfy the left invertive law: $(ab)c=(cb)a$ for all $a,b$ and $c$ in $S$.
An AG-groupoid is the midway structure between a commutative semigroup and a
groupoid. It is a useful non-associative structure with wide applications in
theory of flocks. This property of an AG-groupoid inspired us to use the
fuzzy theory in AG-groupoids. In an AG-groupoid the medial law, $%
(ab)(cd)=(ac)(bd)$ for all $a,b,c$ and $d$ in $S$ holds [6]. If there exists
an element $e$ in an AG-groupoid $S$ such that $ex=x$ for all $x$ in $S$
then $S$ is called an AG-groupoid with left identity $e$. It is to be noted
that if an AG-groupoid $S$ has the right identity then $S$ is a commutative
monoid.If an AG-groupoid $S$ contains left identity then $(ab)(cd)=(dc)(ba)$
holds for all $a,b,c$ and $d$ in $S.$ Also $a(bc)=b(ac)$ holds for all $a,b$
and $c$ in an AG-groupoid with left identity. In an AG-groupoid $S$, an
element $a\in S$ is called idempotent if $a^{2}=a.$

Let $F(S)$ denote the collection of all fuzzy subsets of $S.$ For subsets $%
A,B$ of $S,$ $AB=\{ab\in S:a\in A,b\in B\}$. A non-empty subset $A$ of $S$
is called left(right) ideal of $S$ if $SA\subseteq A$ $(AS\subseteq A).$
Further $A$ is called two-sided ideal if it is both left and right ideal of $%
S$. A non-empty subset $A$ of $S$ is called interior ideal of $S$ if $%
(SA)S\subseteq A.$ A non-empty AG-subgroupoid $A$ of $S$ is called bi-ideal
of $S$ if $(AS)A\subseteq A.$ A non-empty subset $A$ of $S$ is called
idempotent if $AA=A.$ An ideal $P$ of $S$ is called prime if $AB\subseteq P$
implies that either $A\subseteq P$ or $B\subseteq P$, for all ideals $A$ and 
$B$ of $S.$ An AG-groupoid $S$ is called fully prime if every ideal is
prime. The left ideal $P$ of $S$ is called quasi-prime if $AB\subseteq P$
implies that either $A\subseteq P$ or $B\subseteq P$, for all left ideals $A$
and $B$ of $S.$ An AG-groupoid $S$ is called fully quasi-prime if every left
ideal is quasi-prime.

\section{$\protect\bigskip $Fuzzy Ideals in AG-groupoids}

Let $f$ and $g$ be two fuzzy subsets of an AG-groupoid $S.$ The product $%
f\circ g$ is defined by

$(fog)(x)=\QDATOPD\{ . {\underset{x=yz}{\bigvee }\left\{ f(y)\wedge
g(z)\right\} ,\text{\ if }\exists \text{ }y\text{ and }z\in S,\text{ such
that }x=yz,}{0\text{ \ \ \ \ \ \ \ \ \ \ \ \ \ \ \ \ \ \ \ \ \ otherwise.}}$

$\bigskip $

\begin{proposition}
Let $S$ be an AG-groupoid, then the set $(F(S),\circ )$ is an AG-groupoid.
\end{proposition}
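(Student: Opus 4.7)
The plan is to verify the defining identity of an AG-groupoid, namely the left invertive law
\[
(f\circ g)\circ h \;=\; (h\circ g)\circ f
\]
for arbitrary $f,g,h\in F(S)$, since closure of $F(S)$ under $\circ$ is immediate from the definition (the convolution $f\circ g$ is a well-defined function $S\to[0,1]$).

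First I would unfold both sides at an arbitrary $x\in S$. On the left, $((f\circ g)\circ h)(x)$ is either $0$ (if $x$ admits no factorisation at all) or equals
\[
\bigvee_{x=pq}\bigl\{(f\circ g)(p)\wedge h(q)\bigr\}
\;=\;\bigvee_{x=(rs)q}\bigl\{f(r)\wedge g(s)\wedge h(q)\bigr\},
\]
where the inner join passes through the supremum because $\wedge$ distributes over $\bigvee$ on $[0,1]$; contributions from factorisations $x=pq$ for which $p$ is not itself a product collapse to $0$ and so can be absorbed. Symmetrically, $((h\circ g)\circ f)(x)=\bigvee_{x=(ab)c}\{h(a)\wedge g(b)\wedge f(c)\}$.

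Next I would use the left invertive law in $S$ to set up a bijection between the two indexing sets. Given any factorisation $x=(rs)q$, the identity $(rs)q=(qs)r$ yields a factorisation $x=(ab)c$ with $(a,b,c)=(q,s,r)$, and the associated summands match:
\[
f(r)\wedge g(s)\wedge h(q) \;=\; h(a)\wedge g(b)\wedge f(c).
\]
The inverse map is obtained by applying the left invertive law again. Hence the two joins are taken over identical multisets of values of $[0,1]$, so the suprema coincide, establishing the desired equality pointwise. The case where no triple factorisation of $x$ exists on either side gives $0=0$ and is handled uniformly.

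No step is genuinely hard; the only point needing care is the bookkeeping between the ``otherwise $=0$'' clause and the supremum (to ensure we are not implicitly taking a supremum over an empty set on one side while it is nonempty on the other). This is resolved by observing that the bijection induced by $(rs)q\leftrightarrow(qs)r$ shows one side has a nontrivial factorisation of the appropriate shape if and only if the other does.
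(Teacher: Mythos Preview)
Your proof is correct and follows essentially the same approach as the paper: unfold the convolution to a triple supremum over factorisations $x=(rs)q$, apply the left invertive law $(rs)q=(qs)r$ in $S$ to reindex, and re-collapse to obtain $((h\circ g)\circ f)(x)$. Your treatment of the degenerate ``no factorisation'' case is somewhat more explicit than the paper's, but the argument is the same.
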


\begin{proof}
Clearly $F(S)$ is closed. Let $f,g$ and $h$ be in $F(S).$ Then for any $x$
in $S$ we have, $((f\circ g)\circ h)(x)=\underset{x=yz}{\bigvee }\left\{
(f\circ g)\left( y\right) \wedge h(z)\right\} =\underset{x=yz}{\bigvee }%
\left\{ \underset{y=pq}{\bigvee }\left\{ f(p)\wedge g(q)\right\} \wedge
h(z)\right\} =\underset{x=(pq)z}{\bigvee }\left\{ f(p)\wedge g(q)\wedge
h(z)\right\} =\underset{x=(zq)p}{\bigvee }\left\{ h(z)\wedge g(q)\wedge
f(p)\right\} $. Then, further we have $\underset{x=wp}{\bigvee }\left\{ 
\underset{w=zq}{\bigvee }\left( h(z)\wedge g(q)\wedge f(p)\right) \right\} =%
\underset{x=wp}{\bigvee }\left\{ (h\circ g)\left( w\right) \wedge
f(p)\right\} =((h\circ g)\circ f)(x).$ Hence $(F(S),\circ )$ is an
AG-groupoid.
\end{proof}

\begin{corollary}
Let $S$ be an AG-groupoid, then the medial law holds in $F(S).$
\end{corollary}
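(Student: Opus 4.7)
The plan is to obtain the corollary as an immediate consequence of Proposition 1, rather than to redo any calculation with the fuzzy product $\circ$ directly. Proposition 1 has just established that $(F(S),\circ)$ is itself an AG-groupoid, and the introduction records (citing [6]) that the medial law $(ab)(cd)=(ac)(bd)$ holds in every AG-groupoid. Applying this general fact to the AG-groupoid $(F(S),\circ)$ yields $(f\circ g)\circ(h\circ k)=(f\circ h)\circ(g\circ k)$ for all $f,g,h,k\in F(S)$, which is exactly what is to be proved.

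Structurally, I would write the proof as a single sentence invoking Proposition 1 and the medial law in an AG-groupoid, with no pointwise manipulation of suprema and infima needed. If a self-contained derivation were preferred, I would instead carry out three applications of the left invertive law inside $(F(S),\circ)$: starting from $(f\circ g)\circ(h\circ k)$, first move $h\circ k$ to the left using left invertion, then rearrange the inner product $(h\circ k)\circ g$ by the same law, and finally apply left invertion once more to land on $(f\circ h)\circ(g\circ k)$. This is the standard three-step derivation of the medial law in any AG-groupoid, and it is justified here because Proposition 1 already guarantees that $\circ$ satisfies the left invertive law on $F(S)$.

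There is no genuine obstacle: the only work was in Proposition 1, where pointwise manipulation of $\bigvee$ and $\wedge$ was needed to verify the left invertive law for $\circ$. Once that is in hand, the medial law is a purely formal consequence, so the corollary is essentially a one-line remark.
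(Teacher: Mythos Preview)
Your proposal is correct. The paper's own proof is precisely your ``self-contained'' alternative: it writes out the three successive applications of the left invertive law in $(F(S),\circ)$, namely $(f\circ g)\circ(h\circ k)=((h\circ k)\circ g)\circ f=((g\circ k)\circ h)\circ f=(f\circ h)\circ(g\circ k)$, relying on Proposition~1 for the validity of left invertion in $F(S)$; your one-line citation of the general medial law for AG-groupoids is an equally valid shortcut to the same conclusion.
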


\begin{proof}
Let $f,g,h,$ and $k$ be arbitrary elements of $F(S).$ By successive use of
left invertive law, $\left( f\circ g\right) \circ \left( h\circ k\right)
=\left( \left( h\circ k\right) \circ g\right) \circ f=\left( \left( g\circ
k\right) \circ h\right) \circ f=\left( f\circ h\right) \circ \left( g\circ
k\right) .$
\end{proof}

\begin{theorem}
Let $S$ be an AG-groupoid with left identity, then the following properties
hold in $F(S);$
\end{theorem}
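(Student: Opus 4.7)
The statement of the theorem is truncated in the excerpt, but given the discussion immediately preceding it, the properties under the colon are almost certainly the fuzzified versions of the two identities that the introduction singles out as holding in an AG-groupoid with left identity, namely
\[
f\circ(g\circ h)=g\circ(f\circ h)\qquad\text{and}\qquad (f\circ g)\circ(h\circ k)=(k\circ h)\circ(g\circ f),
\]
for all $f,g,h,k\in F(S)$. My plan is to prove each such identity in the same pointwise style used in Proposition~1 and Corollary~1, reducing the identity in $F(S)$ to the corresponding identity in $S$.

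The uniform strategy is as follows. For a fixed $x\in S$, unroll the definition of $\circ$ on the left-hand side until the inner $\bigvee$'s are absorbed into a single supremum ranging over all decompositions of $x$ whose parse tree matches the shape of the left-hand term. Concretely, for the first identity this gives
\[
(f\circ(g\circ h))(x)=\bigvee_{x=p(qr)}\bigl\{f(p)\wedge g(q)\wedge h(r)\bigr\},
\]
and symmetrically for the right-hand side one obtains a supremum indexed by decompositions $x=q(pr)$. The key step is then to invoke the element-wise identity $a(bc)=b(ac)$ in $S$: it provides a bijection between the indexing set $\{(p,q,r):x=p(qr)\}$ and the set $\{(q,p,r):x=q(pr)\}$ that preserves the triple $(f(p),g(q),h(r))$, so the two suprema coincide. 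The same blueprint applies to the second identity, using $(ab)(cd)=(dc)(ba)$ to relabel the decomposition $x=(ab)(cd)$ as $x=(dc)(ba)$.

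I would execute the steps in this order: (i) unroll the left-hand side to a triple (or quadruple) supremum using the associativity/distributivity of $\wedge$ over $\bigvee$, exactly as in the displayed calculation in Proposition~1; (ii) apply the relevant identity in $S$ inside the indexing condition $x=\dots$; (iii) reassemble the right-hand side by folding the inner $\bigvee$'s back into compositions. Step (ii) is the place where the left-identity hypothesis on $S$ is actually used, since without it the underlying identities on $S$ fail.

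The main obstacle I anticipate is not algebraic but combinatorial bookkeeping: one must make sure that the correspondence of decompositions is a genuine bijection on indexing sets, not merely a one-sided containment of suprema. Because $S$ need not be cancellative, a given $x$ can have many decompositions of either shape, and the point is that applying $a(bc)=b(ac)$ to each decomposition of one shape yields \emph{every} decomposition of the other shape and vice-versa. Once this symmetry is stated carefully the equality of the two suprema is immediate, and the proof terminates exactly as in Corollary~1.
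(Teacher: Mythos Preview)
Your proposal is correct and follows essentially the same route as the paper: unroll each side pointwise into a single supremum over decompositions of $x$, apply the element-wise identity $a(bc)=b(ac)$ (respectively $(ab)(cd)=(dc)(ba)$) to relabel the indexing set, and then fold back into the composed form. Your remark about the bijection of indexing sets makes explicit a point the paper passes over silently, but the strategy is identical.
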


$(i)\qquad f\circ \left( g\circ h\right) =g\circ \left( f\circ h\right)
\qquad $for all $f,g$ and $h\;in\;F(S),$

$(ii)\qquad \left( f\circ g\right) \circ \left( h\circ k\right) =\left(
k\circ h\right) \circ \left( g\circ f\right) \qquad $for all$\;f,g,h$ and $%
k\;in\;F(S).$

\begin{proof}
\textit{(i)} Let $x$ be an arbitrary element of $S$. If $x$ is not
expressible as a product of two elements in $S$, then $\left( f\circ \left(
g\circ h\right) \right) (x)=0=\left( g\circ \left( f\circ h\right) \right)
(x).$ Let there exists $y$ and $z$ in $S$ such that $x=yz,$ then$\left(
f\circ \left( g\circ h\right) \right) (x)=$ $\underset{x=yz}{\bigvee }%
\left\{ f\left( y\right) \wedge \left( g\circ h\right) (z)\right\} =\underset%
{x=yz}{\bigvee }\left\{ f\left( y\right) \wedge \underset{z=pq}{\bigvee }%
\left\{ g(p)\wedge h(q)\right\} \right\} =\underset{x=y(pq)}{\bigvee }%
\left\{ f\left( y\right) \wedge g(p)\wedge h(q)\right\} ,$ which implies
that $\underset{x=p(yq)}{\bigvee }\left\{ g\left( p\right) \wedge f(y)\wedge
h(q)\right\} =\underset{x=pw}{\bigvee }\left\{ g\left( p\right) \wedge 
\underset{w=yq}{\bigvee }\left\{ f(y)\wedge h(q)\right\} \right\} $ and so
we have $\underset{x=pw}{\bigvee }\left\{ g\left( p\right) \wedge \left(
f\circ h\right) (w)\right\} =\left( g\circ \left( f\circ h\right) \right)
(x).$ Thus, $\left( f\circ \left( g\circ h\right) \right) (x)=\left( g\circ
\left( f\circ h\right) \right) (x).$ If $z$ is not expressible as a product
of two elements in $S$, then$\left( f\circ \left( g\circ h\right) \right)
(x)=0=\left( g\circ \left( f\circ h\right) \right) (x).$ Hence, $\left(
f\circ \left( g\circ h\right) \right) (x)=\left( g\circ \left( f\circ
h\right) \right) (x)$ for all $x$ in $S$.

(\textit{ii)} If any element $x$ of $S$ is not expressible as product of two
elements in $S$ at any stage, then$\left( \left( f\circ g\right) \circ
\left( h\circ k\right) \right) (x)=0=\left( \left( k\circ h\right) \circ
\left( g\circ f\right) \right) (x).$ Let there exists $y,z$ in $S$ such that 
$x=yz,$ then$\left( \left( f\circ g\right) \circ \left( h\circ k\right)
\right) (x)=\underset{x=yz}{\bigvee }\left\{ \left( f\circ g\right)
(y)\wedge \left( h\circ k\right) (z)\right\} =\underset{x=yz}{\bigvee }%
\left\{ \underset{y=pq}{\bigvee }\left\{ f(p)\wedge g(q)\right\} \wedge 
\underset{z=uv}{\bigvee }\left\{ h(u)\wedge k(v)\right\} \right\} $ and
hence the above equality becomes $\underset{x=(pq)(uv)}{\bigvee }\left\{
f(p)\wedge g(q)\wedge h(u)\wedge k(v)\right\} =\underset{x=(vu)(qp)}{\bigvee 
}\left\{ k(v)\wedge h(u)\wedge g(q)\wedge f(p)\right\} =\underset{x=mn}{%
\bigvee }\left\{ \underset{m=vu}{\bigvee }\left\{ k(v)\wedge h(u)\right\}
\wedge \underset{n=qp}{\bigvee }\left\{ g(q)\wedge f(p)\right\} \right\} =%
\underset{x=mn}{\bigvee }\left\{ (k\circ h)(m)\wedge (g\circ f)(n)\right\}
=\left( (k\circ h)\circ (g\circ f)\right) (x).$
\end{proof}

\begin{proposition}
An AG-groupoid $S$ with $F(S)=(F(S))^{2}$ is commutative semigroup if and
only if $(f\circ g)\circ h=f\circ (h\circ g)$ holds for all fuzzy subsets $%
f,g$ and $h$ of $S.$
\end{proposition}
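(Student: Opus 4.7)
The plan is to treat the two implications separately. For the forward direction, I would note that if $S$ is a commutative semigroup then the fuzzy composition $\circ$ on $F(S)$ inherits both associativity and commutativity directly: rewriting $x=yz$ as $x=zy$ and using the symmetry of $\wedge$ gives $f\circ g=g\circ f$, while the bijection between factorizations $x=(pq)r$ and $x=p(qr)$ coming from associativity of $S$ gives $(f\circ g)\circ h=f\circ (g\circ h)$. Combining these, $(f\circ g)\circ h=f\circ (g\circ h)=f\circ (h\circ g)$. The hypothesis $F(S)=(F(S))^2$ plays no role in this direction.

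The backward direction is where the real content sits. My strategy is to test the identity on characteristic functions of singletons: for $a\in S$ let $\chi_{a}$ denote the fuzzy subset with $\chi_{a}(a)=1$ and $\chi_{a}(y)=0$ otherwise. A short verification gives $\chi_{a}\circ \chi_{b}=\chi_{ab}$. Specializing the hypothesis to $f=\chi_{a}$, $g=\chi_{b}$, $h=\chi_{c}$ then yields $\chi_{(ab)c}=\chi_{a(cb)}$, hence $(ab)c=a(cb)$ for all $a,b,c\in S$. Combining this with the left invertive law $(ab)c=(cb)a$ gives $(cb)a=a(cb)$, so every element of the form $cb$ commutes with every element of $S$.

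At this point I would invoke $F(S)=(F(S))^{2}$: for each $x\in S$ the characteristic function $\chi_{x}$ must be expressible as $g\circ h$ for some $g,h\in F(S)$, and analyzing where the supremum defining $(g\circ h)(x)=1$ is achieved forces the existence of $u,v\in S$ with $x=uv$ (and $g(u)=h(v)=1$). Thus every element of $S$ is a product, and the previous paragraph then upgrades to $xa=ax$ for all $x,a\in S$; i.e., $S$ is commutative. Commutativity together with the left invertive law immediately gives associativity via $(ab)c=(cb)a=a(cb)=a(bc)$, so $S$ is a commutative semigroup as required.

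The step I expect to need the most care is the extraction of a genuine factorization $x=uv$ from $\chi_{x}=g\circ h$: one must confirm that the supremum is actually attained at a pair on which both $g$ and $h$ take the value $1$, rather than merely approached by values below $1$. I would record this as a short preliminary remark before running the main argument, after which the remaining manipulations are routine applications of the left invertive law.
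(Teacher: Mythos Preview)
Your proof is correct, but the backward direction takes a different route from the paper's. The paper stays entirely at the level of $F(S)$: given arbitrary fuzzy subsets $f,g$, it uses $F(S)=(F(S))^{2}$ to write $f=h\circ k$, then computes
\[
f\circ g=(h\circ k)\circ g=(g\circ k)\circ h=g\circ(h\circ k)=g\circ f,
\]
using the left invertive law in $F(S)$ followed by one application of the hypothesis; associativity then follows from commutativity plus left invertive. The passage back to $S$ is left implicit (via $a\mapsto \chi_{a}$). You instead specialize to singleton characteristic functions immediately, extract the element-level identity $(ab)c=a(cb)$ in $S$, and then invoke $F(S)=(F(S))^{2}$ only on $\chi_{x}$ to show that each $x\in S$ factors.

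Both arguments are valid. The paper's is slicker and sidesteps your flagged attainment worry entirely, since it never needs to read off an element-level factorization from a fuzzy equation. On your route, that worry is in fact a non-issue: you do not need $g(u)=h(v)=1$, only that \emph{some} factorization $x=uv$ exists, and this is immediate from the definition of $\circ$, since if $x$ admitted no factorization then $(g\circ h)(x)=0\neq 1=\chi_{x}(x)$. So the ``preliminary remark'' you propose reduces to this one-line observation, and the rest of your argument goes through cleanly.
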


\begin{proof}
Let an AG-groupoid $S$ is commuatative semigroup. For any fuzzy subsets $f,g$
and $h$ of $S$ by use of proposition 1 and commutative law, $(f\circ g)\circ
h=(h\circ g)\circ f=h\circ (f\circ g).$ Conversally let $(f\circ g)\circ
h=f\circ (h\circ g)$ holds for all fuzzy subsets $f,g$ and $h$ of $S.$ We
have to show that an AG-groupoid $S$ is commutative semigroup. Let $f$ and $%
g $ be any arbitrary fuzzy subsets of $S.$ Since $F(S)=(F(S))^{2}$ so $%
f=h\circ k$ where $h$ and $k$ are any fuzzy subsets of $S.$ Now $f\circ
g=(h\circ k)\circ g=(g\circ k)\circ h=g\circ (h\circ k)=g\circ f$, which
shows that commutative law holds in $S.$ By successive use of proposition 1
and commutative law $(f\circ g)\circ k=(k\circ g)\circ f=f\circ (k\circ
g)=f\circ (g\circ k).$
\end{proof}

\bigskip

A fuzzy subset of $S$ is called fuzzy AG-subgroupoid of $S$ if $f(ab)\geq
f(a)\wedge f(b)$ for all $a$ and $b$ in $S,$ and is called fuzzy left(right)
ideal of $S$ if $f(ab)\geq f(b)$ \ ($f(ab)\geq f(a)$) for all $a$ and $b$ in 
$S.$ A fuzzy subset $f$ of $S$ is called a fuzzy two sided ideal(or a fuzzy
ideal) of $S$ if it is both fuzzy left and fuzzy right ideal of $S.$ A fuzzy
subset $f$ of $S$ is called fuzzy idempotent if $f\circ f=f.$ For a subset $%
A $ of $S$ the characteristic function, $C_{A}$ is defined by $%
C_{A}=\QDATOPD\{ . {1\text{\ if }x\in A,}{0\text{\ if }x\notin A.}$

Note that an AG-groupoid $S$ can be considered a fuzzy subset of itself and
we write $S=\mathbf{C}_{S},i.e.,$ $S(x)=1$, for all $x$ in $S.$ Let $a$ be
an arbitrary element of $S,$ then for $\lambda $ in $(0,1]$ and for $x$ in $%
S $ we define fuzzy point $a_{\lambda }$ of $S$ as; $a_{\lambda }\left(
x\right) =\QDATOPD\{ . {\lambda \text{\ \ if }x=a,}{0\text{ \ otherwise.}}.$
A very simple calculations shows that for a fuzzy subset $f$ of $S$ we have $%
f=\bigcup\limits_{a_{\lambda }\in f}a_{\lambda }.$ It is easy to see from
[1] that a non-empty subset $A$ of an AG-groupoid $S$ is AG-subgroupoid if
and only if $C_{A}$ is a fuzzy AG-subgroupoid of $S,$ and $A$ is left(right,
two-sided) ideal of $S$\ if and only if $C_{A}$ is a fuzzy left(right,
two-sided) ideal of $S.$ For non-empty sets $A$ and $B$ of an AG-groupoid $S$%
, $C_{A}\cap C_{B}=C_{A\cap B}$ and $C_{A}\circ C_{B}=C_{AB}.$ It is easy to
see that for every fuzzy subset $f$ of an AG-groupoid $S$, we have $%
f\subseteq S.$ The following lemmas have the same proof as in [1].

\begin{lemma}
Let $f$ be a fuzzy subset of an AG-groupoid $S,$ then the following
properties hold.

(i) $f$ is a fuzzy AG-subgroupoid of $S$ if and only if $f\circ f\subseteq
f. $

(ii) $f$ is a fuzzy left ideal of $S$ if and only if $S\circ f\subseteq f.$

(iii) $f$ is a fuzzy right ideal of $S$ if and only if $f\circ S\subseteq f.$

(iv) $f$ is a fuzzy ideal of $S$ if and only if $S\circ f\subseteq f$ and $%
f\circ S\subseteq f.$
\end{lemma}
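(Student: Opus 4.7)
The plan is to prove each of the four equivalences by unfolding the definition of the fuzzy product $\circ$ and comparing it pointwise with $f$. All four parts follow the same two-direction template, so the substantive work is essentially a single chase adapted to the appropriate choice of the two factors. I would prove (ii) in detail and indicate how (i), (iii), (iv) specialise from it.

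For the forward direction of (ii), assume $f(ab) \geq f(b)$ for all $a,b \in S$ and fix $x \in S$. If $x$ admits no factorization $x = yz$, then $(S \circ f)(x) = 0$ by definition of $\circ$, and the inequality $(S\circ f)(x) \leq f(x)$ is immediate. Otherwise, since $S(y) = 1$ for every $y \in S$, the product collapses to $(S \circ f)(x) = \bigvee_{x=yz} f(z)$, and applying the left-ideal inequality termwise gives $f(z) \leq f(yz) = f(x)$, so the supremum is bounded by $f(x)$. For the converse, assume $S \circ f \subseteq f$; then for any $a,b \in S$ the decomposition $ab = a\cdot b$ is a valid factorization, so $f(ab) \geq (S \circ f)(ab) \geq S(a) \wedge f(b) = f(b)$, as required.

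Parts (i) and (iii) run identically with the character of one slot changed: for (i) both slots become $f$, so the supremum is $\bigvee \{f(y) \wedge f(z)\}$ and one uses $f(y) \wedge f(z) \leq f(yz)$ directly; for (iii) the right slot becomes $S$, collapsing to $\bigvee f(y)$ and appealing to the right-ideal inequality $f(y) \leq f(yz)$. Part (iv) is then an immediate consequence of (ii) and (iii), since a fuzzy two-sided ideal is by definition simultaneously a fuzzy left and fuzzy right ideal.

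The closest thing to an obstacle is bookkeeping rather than mathematics: one must dispose of the case where $x$ has no factorization before taking suprema, and keep straight which slot the characteristic function $S$ occupies, so that the constant $1$ collapses the correct variable. Notably no AG-groupoid identity (medial, left-invertive, left-identity) is invoked, so the argument is formally the same as in the associative setting referenced as [1] in the paper.
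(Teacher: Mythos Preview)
Your proof is correct and is precisely the standard argument the paper has in mind: the paper does not write out a proof at all but simply states that ``the following lemmas have the same proof as in [1]'' (Mordeson's \emph{Fuzzy Semigroups}), and your final paragraph makes exactly this observation---none of the AG-groupoid identities are used, so the semigroup proof carries over verbatim. There is nothing to add.
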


\begin{lemma}
Let $S$ be an AG-groupoid. Then the following properties hold.

(i) Let $f$ and $g$ be two fuzzy AG-subgroupoids of $S.$ Then $f\cap g$ is
also a fuzzy AG-subgroupoid of $S.$

(ii)Let $f$ and $g$ be two fuzzy left(right, two-sided) ideal of $S.$ Then $%
f\cap g$ is also a fuzzy left(right, two-sided) ideal of $S.$
\end{lemma}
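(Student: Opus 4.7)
The plan is to work directly from the pointwise definition $(f\cap g)(x)=f(x)\wedge g(x)$, unpack the defining inequality for each class of fuzzy substructure, and then reshuffle the meets using commutativity/associativity/idempotence of $\wedge$ on $[0,1]$. No appeal to the convolution $\circ$ is needed; this is why the previous lemma's characterizations (e.g. $S\circ f\subseteq f$) can be bypassed, which keeps the argument short.

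For part (i), fix $a,b\in S$ and start from $(f\cap g)(ab)=f(ab)\wedge g(ab)$. Apply the fuzzy AG-subgroupoid hypothesis on each of $f$ and $g$ to bound this below by $\bigl(f(a)\wedge f(b)\bigr)\wedge\bigl(g(a)\wedge g(b)\bigr)$, then regroup using commutativity and associativity of $\wedge$ to obtain $\bigl(f(a)\wedge g(a)\bigr)\wedge\bigl(f(b)\wedge g(b)\bigr)=(f\cap g)(a)\wedge (f\cap g)(b)$. Since $a,b$ were arbitrary, $f\cap g$ is a fuzzy AG-subgroupoid.

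For part (ii), the same template gives all three cases. For fuzzy left ideals, bound $(f\cap g)(ab)=f(ab)\wedge g(ab)$ below by $f(b)\wedge g(b)=(f\cap g)(b)$ using the left-ideal hypothesis on $f$ and on $g$. For fuzzy right ideals, replace $f(b),g(b)$ by $f(a),g(a)$. For fuzzy two-sided ideals, combine the two to obtain both inequalities, which together say $f\cap g$ is a two-sided fuzzy ideal.

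There is essentially no obstacle here — the only thing to be careful about is correctly interpreting $f\cap g$ pointwise as a meet in $[0,1]$ and using the monotonicity/distributivity of $\wedge$, all of which are standard. The proof is genuinely the same as the semigroup case cited as [1], since neither the AG-groupoid axiom nor left identity enters the argument; only the defining inequalities for the relevant fuzzy substructures are used.
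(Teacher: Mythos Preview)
Your argument is correct and is precisely the standard pointwise computation the paper has in mind: the paper does not give an explicit proof of this lemma but simply remarks that it ``has the same proof as in [1]'' (the fuzzy semigroup setting), and what you wrote is exactly that proof. As you note, neither the left invertive law nor a left identity is needed, only the defining inequalities and the lattice properties of $\wedge$ on $[0,1]$.
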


\begin{lemma}
In an AG-groupoid with left identity $S\circ S=S.$
\end{lemma}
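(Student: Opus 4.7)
The plan is to unpack what the equality $S\circ S=S$ means at the level of fuzzy subsets and then verify both inclusions pointwise. Recall that $S$ denotes the characteristic function $\mathbf{C}_{S}$, so $S(x)=1$ for every $x\in S$. Consequently $(S\circ S)(x)=\bigvee_{x=yz}\{S(y)\wedge S(z)\}$ equals $1$ whenever $x$ is expressible as a product of two elements of $S$, and equals $0$ otherwise. The claim $S\circ S=S$ therefore reduces to showing that every element of $S$ can be written as such a product; the left identity is precisely what delivers this.

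First I would dispose of the inclusion $S\circ S\subseteq S$. This is immediate from the general fact, already recorded just above the lemma, that $f\subseteq S$ for every fuzzy subset $f$ of $S$, because $S$ is the constant function $1$ and hence the top element of $F(S)$ under pointwise order. No computation is required here.

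For the reverse inclusion $S\subseteq S\circ S$, fix an arbitrary $x\in S$. Since $S$ has a left identity $e$, we have the factorisation $x=ex$, which witnesses that $x$ is expressible as a product of two elements of $S$. Therefore
\[
(S\circ S)(x)\;=\;\bigvee_{x=yz}\{S(y)\wedge S(z)\}\;\geq\;S(e)\wedge S(x)\;=\;1\wedge 1\;=\;1\;=\;S(x).
\]
Combining the two inclusions gives $S\circ S=S$.

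There is no real obstacle; the content of the lemma is entirely carried by the existence of the left identity, which guarantees surjectivity of multiplication onto $S$. The only mild care needed is to remember that $S$ on both sides of the equality is being interpreted as the fuzzy subset $\mathbf{C}_{S}$, so that the identity is an equality in $F(S)$ rather than at the level of underlying sets.
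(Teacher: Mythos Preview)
Your proof is correct and follows essentially the same route as the paper: both use the factorisation $x=ex$ supplied by the left identity to obtain $(S\circ S)(x)\geq S(e)\wedge S(x)=1$, whence $(S\circ S)(x)=1=S(x)$. The only difference is that you separate out the trivial inclusion $S\circ S\subseteq S$ explicitly, whereas the paper simply concludes equality directly from $(S\circ S)(x)=1$.
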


\begin{proof}
Every $x$ in $S$ can be written as $x=ex,$ where $e$ is the left identity in 
$S.$ So $S\circ S(x)=\underset{x=yz}{\bigvee }\{S(y)\wedge S(z)\}\geq
\{S(e)\wedge S(x)\}=1.$ Hence $(S\circ S)(x)=1=S(x)$ for all $x$ in $S.$
\end{proof}

\begin{lemma}
\label{Sof=f}In an AG-groupoid $S$ with left identity, for every fuzzy left
ideal $f$ of $S$, we have $(S\circ f)=f.$
\end{lemma}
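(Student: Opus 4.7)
The plan is to prove equality by showing the two inclusions $S\circ f\subseteq f$ and $f\subseteq S\circ f$ separately, and then combining them.

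For the first inclusion, I would simply invoke Lemma 1(ii) already stated in the excerpt: since $f$ is by hypothesis a fuzzy left ideal of $S$, we automatically have $S\circ f\subseteq f$. No further work is needed here.

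For the reverse inclusion $f\subseteq S\circ f$, the key idea is to exploit the left identity $e$ of $S$. For any $x\in S$, the equation $x=ex$ gives a valid factorization, so $x$ is always expressible as a product of two elements of $S$. Therefore in the definition of $(S\circ f)(x)=\bigvee_{x=yz}\{S(y)\wedge f(z)\}$, the particular factorization $y=e$, $z=x$ contributes the value $S(e)\wedge f(x)=1\wedge f(x)=f(x)$ to the supremum. Hence $(S\circ f)(x)\geq f(x)$ for every $x\in S$, which gives $f\subseteq S\circ f$.

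There is essentially no obstacle here; the statement is a one-line consequence of the existence of a left identity combined with Lemma 1(ii). The only thing to be careful about is making explicit that $S(e)=1$ (which follows from the convention $S=\mathbf{C}_S$ noted earlier in the paper) and that the supremum dominates each individual term in the join. The entire proof should fit in two or three sentences.
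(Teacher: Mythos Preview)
Your proposal is correct and follows essentially the same argument as the paper: the paper also notes that only $f\subseteq S\circ f$ needs proof (the other inclusion being immediate from $f$ being a fuzzy left ideal), and then uses the factorization $x=ex$ to get $(S\circ f)(x)\geq S(e)\wedge f(x)=f(x)$. There is no meaningful difference between your write-up and the paper's.
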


\begin{proof}
It is sufficient to show that $f\subseteq S\circ f$. Now for any $x$ in $S$, 
$(S\circ f)(x)=\underset{x=yz}{\bigvee }\{S(y)\wedge f(z)\}$. Since $x=ex,$
for all $x$ in $S,$ as $e$ is left identity in $S,$ so $\underset{x=yz}{%
\bigvee }\{S(y)\wedge f(z)\}\geq S(e)\wedge f(x)=f(x)$.
\end{proof}

\begin{proposition}
Let $S$ be an AG-groupoid with left identity and $f$ and $k$ are fuzzy left
ideals in $S$ then for any fuzzy subsets $g$ and $h$ of $S$, $f\circ
g=h\circ k$ implies $g\circ f=k\circ h.$
\end{proposition}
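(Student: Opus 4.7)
The plan is to transform $g\circ f$ into $(f\circ g)\circ S$ first, substitute the given equation, and then unfold $(h\circ k)\circ S$ back into $k\circ h$. The key algebraic tools are Theorem 1 (both parts), the left invertive law (Proposition 1), Lemma 3 ($S\circ S=S$), and Lemma \ref{Sof=f} (applied to the fuzzy left ideals $f$ and $k$).

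First I would apply Theorem 1(ii) with $h$ and $k$ both replaced by $S$, giving
\[
(f\circ g)\circ(S\circ S)=(S\circ S)\circ(g\circ f).
\]
Lemma 3 collapses both $S\circ S$ to $S$, so $(f\circ g)\circ S=S\circ(g\circ f)$. Next I would apply Theorem 1(i) to the right-hand side to move $S$ past $g$:
\[
S\circ(g\circ f)=g\circ(S\circ f),
\]
and because $f$ is a fuzzy left ideal, Lemma \ref{Sof=f} gives $S\circ f=f$. Composing these observations yields the key identity
\[
g\circ f=(f\circ g)\circ S.
\]

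Now I would substitute the hypothesis $f\circ g=h\circ k$ to get $g\circ f=(h\circ k)\circ S$. On this last expression I would apply the left invertive law (in $F(S)$, from Proposition 1) to rewrite $(h\circ k)\circ S=(S\circ k)\circ h$. Finally, since $k$ is a fuzzy left ideal, Lemma \ref{Sof=f} gives $S\circ k=k$, and the chain closes with $g\circ f=k\circ h$, as required.

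The main obstacle is the very first step: producing any identity that turns a left-hand factor $f\circ g$ into the reversed product $g\circ f$. The mechanism that makes this work is the transpose-style identity of Theorem 1(ii), which is only available because $S$ has a left identity; the auxiliary fact $S\circ S=S$ is precisely what allows that identity to reduce to the usable form $(f\circ g)\circ S=S\circ(g\circ f)$. Once that bridge is in place, the two hypotheses ``$f$ and $k$ are fuzzy left ideals'' are exactly the right amount of strength to absorb the dangling $S$-factors on each side.
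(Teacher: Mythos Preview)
Your argument is correct, and the overall strategy coincides with the paper's: both proofs pivot through the identity $g\circ f=(f\circ g)\circ S$, substitute the hypothesis $f\circ g=h\circ k$, and then unfold $(h\circ k)\circ S=(S\circ k)\circ h=k\circ h$ using the left invertive law and Lemma~\ref{Sof=f} for the fuzzy left ideal $k$. The second half of your chain is literally the paper's computation.

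Where you differ is in the derivation of the first identity. The paper writes $g\circ f=(S\circ g)\circ f=(f\circ g)\circ S$, which silently uses $S\circ g=g$; but $g$ is only assumed to be an arbitrary fuzzy subset, so this step is not justified by Lemma~\ref{Sof=f}. Your route via Theorem~1(ii), Lemma~3, Theorem~1(i), and then Lemma~\ref{Sof=f} applied to $f$ reaches the same identity $g\circ f=(f\circ g)\circ S$ while using exactly the stated hypothesis that $f$ (not $g$) is a fuzzy left ideal. So your version is not merely an alternative: it is the rigorous completion of the paper's intended argument.
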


\begin{proof}
Since $f$ and $h$ are fuzzy left ideals in $S$ so by above lemma $S\circ f=f$
and $S\circ h=h.$ Now $g\circ f=(S\circ g)\circ f=(f\circ g)\circ S=(h\circ
k)\circ S=(S\circ k)\circ h=k\circ h.$
\end{proof}

\bigskip

The following corollary is direct consequence of the successive use of left
invertive law in fuzzy AG-groupoid shown in proposition 1.

\begin{corollary}
In an AG-groupoid $S$ for any fuzzy subsets $f,g$ and $h$ the following
conditions are equivalent:

(i) $(f\circ g)\circ h=g\circ (f\circ h),$

(ii) $(f\circ g)\circ h=g\circ (h\circ f).$
\end{corollary}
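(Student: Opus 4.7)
The plan is to exploit the fact, established in Proposition 1, that $(F(S),\circ)$ itself satisfies the left invertive law, so for any fuzzy subsets $a,b,c$ of $S$ we have $(a\circ b)\circ c=(c\circ b)\circ a$. Everything will follow by a symmetric renaming argument plus one application of this identity.

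For the direction $(i)\Rightarrow(ii)$, my first move is to apply the hypothesis $(i)$ not to the original triple $(f,g,h)$ but to the reordered triple $(h,g,f)$, which yields
\[
(h\circ g)\circ f \;=\; g\circ(h\circ f).
\]
Then I would invoke the left invertive law in $F(S)$ to rewrite the left-hand side as $(f\circ g)\circ h=(h\circ g)\circ f$. Chaining these two equalities gives $(f\circ g)\circ h=g\circ(h\circ f)$, which is exactly $(ii)$.

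The converse $(ii)\Rightarrow(i)$ runs along the mirror-image path: apply $(ii)$ to the triple $(h,g,f)$ to obtain $(h\circ g)\circ f=g\circ(f\circ h)$, and then use the left invertive law $(f\circ g)\circ h=(h\circ g)\circ f$ to conclude $(f\circ g)\circ h=g\circ(f\circ h)$, which is $(i)$.

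I do not foresee a real obstacle here, since the result is essentially a bookkeeping exercise in the AG-groupoid $(F(S),\circ)$; the only point requiring care is to apply the assumed identity with the letters permuted before appealing to the left invertive law, so that the middle term $(h\circ g)\circ f$ serves as the bridge between the two sides. Because no use is made of a left identity, the argument is valid in an arbitrary AG-groupoid, as the statement of the corollary asserts.
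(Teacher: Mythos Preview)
Your argument is correct and matches the paper's approach: the paper offers no detailed proof, merely noting that the corollary is a ``direct consequence of the successive use of left invertive law in fuzzy AG-groupoid shown in Proposition~1,'' and your derivation makes this explicit by applying the left invertive law in $F(S)$ together with the assumed identity at the permuted triple $(h,g,f)$. The one clarification worth adding is that you are (correctly) reading (i) and (ii) as universal identities over all triples, so that instantiating the hypothesis at $(h,g,f)$ is legitimate; with that reading your chain $(f\circ g)\circ h=(h\circ g)\circ f=g\circ(h\circ f)$ (and its mirror) is exactly what the paper intends.
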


\begin{theorem}
If $S$ is an AG-groupoid then $Q=\{f\;|\;f\in S,\;f\circ
h=f\;where\;h=h\circ h\}$ is a commutative monoid in $S$.
\end{theorem}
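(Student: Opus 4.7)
The plan is to read $Q$ as the set of fuzzy subsets $f$ of $S$ satisfying $f\circ h=f$ for a common fuzzy idempotent $h\in F(S)$ (so $h\circ h=h$), and to show that $h$ itself plays the role of the identity element. I will extract closure and commutativity simultaneously by evaluating $(f_1\circ f_2)\circ h$ in two different ways, one through the medial law and one through the left invertive law; associativity will then be automatic once commutativity is in hand.

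First I would check that $h$ is a two-sided identity on $Q$. The equality $f\circ h=f$ is given by definition, and the opposite one-sided identity comes out of
\[
h\circ f=(h\circ h)\circ f=(f\circ h)\circ h=f\circ h=f,
\]
using, in order, idempotence of $h$, the left invertive law that governs $F(S)$ by Proposition~1, and the defining property $f\circ h=f$.

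The main step is the double computation of $(f_1\circ f_2)\circ h$ for $f_1,f_2\in Q$. Replacing the trailing $h$ by $h\circ h$ and applying the medial law of Corollary~1 yields
\[
(f_1\circ f_2)\circ h=(f_1\circ f_2)\circ(h\circ h)=(f_1\circ h)\circ(f_2\circ h)=f_1\circ f_2,
\]
which gives closure of $Q$ under $\circ$. Applying the left invertive law to the same expression instead gives
\[
(f_1\circ f_2)\circ h=(h\circ f_2)\circ f_1=f_2\circ f_1,
\]
where the last equality uses the identity property $h\circ f_2=f_2$ established in the previous step. Comparing the two values of $(f_1\circ f_2)\circ h$ forces $f_1\circ f_2=f_2\circ f_1$, so $\circ$ is commutative on $Q$.

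With commutativity in hand, associativity in the ambient AG-groupoid $F(S)$ is immediate: left invertive followed by two applications of commutativity gives $(f\circ g)\circ k=(k\circ g)\circ f=f\circ(k\circ g)=f\circ(g\circ k)$, so $Q$ is a commutative semigroup, and together with the identity $h$ this exhibits $(Q,\circ)$ as a commutative monoid. I expect the only real obstacle to be noticing that closure and commutativity should be read off together from the two simplifications of $(f_1\circ f_2)\circ h$; once that trick is spotted, every remaining manipulation is a direct application of the identities established earlier in the excerpt.
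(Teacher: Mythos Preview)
Your argument is correct and matches the paper's proof in substance: closure comes from the medial law, commutativity from the left invertive law, associativity from commutativity, and $h$ serves as identity. The only cosmetic difference is ordering: you first establish $h\circ f=f$ directly and then read commutativity off the left-invertive rewrite $(f_1\circ f_2)\circ h=(h\circ f_2)\circ f_1$, whereas the paper derives commutativity from $f\circ g=(f\circ h)\circ g=(g\circ h)\circ f=g\circ f$ using only the defining right-identity property and obtains the two-sided identity of $h$ afterward from commutativity.
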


\begin{proof}
The subset $Q$ is non-empty since $h\circ h=h$ which implies that $h$ is in $%
Q.$ Let $f$ and $g$ be fuzzy subsets of $S$ in $Q$ then $f\circ h=f$ and $%
g\circ h=g.$ Consider $f\circ g=(f\circ h)\circ (g\circ h)=(f\circ g)\circ
(h\circ h)=(f\circ g)\circ h$ which implies that $Q$ is closed. Now for any
fuzzy subsets $f$ and $g$ in $Q,$ $f\circ g=(f\circ h)\circ g=(g\circ
h)\circ f=g\circ f$ which implies that commutative law holds in $Q$ and
associative law holds in $Q$ due to commutativity. Since for any fuzzy
subset $f$ in $Q$ we have $f\circ h=f$, where $h$ is fixed, implies that $h$
is a right identity in $S$ and hence an identity.
\end{proof}

Let $S$ be an AG-groupoid and $a_{\lambda }$ be a fuzzy point in $S.$ The
smallest fuzzy left ideal of $S$ containing $a_{\lambda }$ is called a fuzzy
left ideal of $S$ generated by $a_{\lambda }.$ Let we denote the smallest
fuzzy left ideal of $S$ generated by $a_{\lambda }$ by $\left\langle
a_{\lambda }\right\rangle _{L}.$

\begin{theorem}
Let $S$ be an AG-groupoid with left identity $e$ and $a_{\lambda }$ be a
fuzzy point in $S,$ then $\left\langle a_{\lambda }\right\rangle _{L}=f$,
where $f(x)=\QDATOPD\{ . {\lambda \text{\ }\ \ \text{if \ there exists }b\in
S\text{ such that }x=ba,}{0\ \ \ \text{\ \ \ \ \ \ \ \ \ \ \ \ \ \ \ \ \ \ \
\ \ \ \ \ \ \ \ \ \ \ \ \ \ \ \ otherwise.}}$
\end{theorem}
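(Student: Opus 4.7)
The plan is to show that the fuzzy subset $f$ defined in the statement is (i) a fuzzy left ideal of $S$, (ii) contains the fuzzy point $a_{\lambda}$ pointwise, and (iii) is dominated pointwise by every fuzzy left ideal of $S$ that contains $a_{\lambda}$.

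For (ii), since $e$ is a left identity we have $a = ea$, and so the witness $b = e$ gives $f(a) = \lambda = a_{\lambda}(a)$; for every $x \neq a$ we have $a_{\lambda}(x) = 0 \le f(x)$. Hence $a_{\lambda} \subseteq f$.

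The technical heart of the argument is (i). By Lemma 1(ii) it suffices to verify $f(yx) \ge f(x)$ for all $x, y \in S$. This is trivial when $f(x) = 0$, so suppose $f(x) = \lambda$ and pick $b \in S$ with $x = ba$; I must exhibit some $b' \in S$ satisfying $yx = b'a$, which will force $f(yx) = \lambda$. Applying in turn the identity $a(bc) = b(ac)$, the relation $eb = b$, the identity $(ab)(cd) = (dc)(ba)$, and finally the left invertive law (all available in an AG-groupoid with left identity and recalled in the introduction), I compute
\[
yx \;=\; y(ba) \;=\; b(ya) \;=\; (eb)(ya) \;=\; (ay)(be) \;=\; \bigl((be)y\bigr)a,
\]
so that $b' = (be)y$ works. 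The main obstacle is precisely finding this chain in the right order so that the factor $a$ migrates to the rightmost position; none of the single identities does the job alone, and one has to temporarily introduce $e$ to create a four-factor word on which the rearrangement $(ab)(cd) = (dc)(ba)$ can operate.

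For (iii), let $g$ be any fuzzy left ideal of $S$ containing $a_{\lambda}$. If $f(x) = 0$ there is nothing to prove; otherwise $x = ba$ for some $b \in S$, and the fuzzy-left-ideal property of $g$ yields
\[
g(x) \;=\; g(ba) \;\ge\; g(a) \;\ge\; a_{\lambda}(a) \;=\; \lambda \;=\; f(x).
\]
Hence $f \subseteq g$, which proves minimality and identifies $f$ with $\langle a_{\lambda}\rangle_{L}$.
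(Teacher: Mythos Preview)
Your proof is correct and follows essentially the same three-step strategy as the paper: verify that $f$ is a fuzzy left ideal, that $a_{\lambda}\subseteq f$, and that $f$ is contained in every fuzzy left ideal containing $a_{\lambda}$. The only cosmetic difference is the particular chain of identities used to push $a$ to the rightmost position in part (i): the paper writes $x(ba)=(ex)(ba)=(ab)(xe)=((xe)b)a$, whereas you first apply $y(ba)=b(ya)$ and then insert $e$; both routes are valid and rely on the same stock of AG-groupoid-with-left-identity identities. (A minor remark: your appeal to ``Lemma~1(ii)'' is unnecessary, since $f(yx)\ge f(x)$ is already the definition of a fuzzy left ideal.)
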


\begin{proof}
Let $x$ and $y$ be an arbitrary elements of $S.$ Let $f(y)=0$ then $%
f(xy)\geq 0=f\left( y\right) $ and if $f\left( y\right) \neq 0$ then there
exists an element $b$ in $S$ such that $y=ba.$ Now, $%
f(xy)=f(x(ba))=f((ex)(ba))=f((ab)(xe))=f(((xe)b)a)=\lambda \geq f(y),$ which
implies that $f$ is a fuzzy left ideal in $S$. Since $a=ea,$ so $%
f(a)=f(ea)=\lambda $ and hence $a_{\lambda }$ is in $f.$ Let $g$ be any
other left ideal of $S$ containing $a_{\lambda },$ then $g(a)\geq \lambda .$
Let $f(x)=0$ for all $x$ in $S$ then $g(x)\geq 0=f(x).$ On the other hand if 
$f(x)=\lambda $ then there exists $b$ in $S$ such that $x=ba$ and $%
g(x)=g(ba)\geq g(a)\geq \lambda =f(x),$ which implies that $f\subseteq g.$
Hence $f$ is the smallest left ideal generated by $a$ in $S.$
\end{proof}

\begin{proposition}
Let $S$ be an AG-groupoid, then every fuzzy left ideal which is idempotent
is a fuzzy ideal.
\end{proposition}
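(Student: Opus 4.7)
The plan is to use Lemma 1 to reformulate the goal as an inclusion of fuzzy products, and then push everything through the left invertive law available in $F(S)$ from Proposition 1. Given a fuzzy left ideal $f$ with $f\circ f = f$, we already know $f$ is a fuzzy left ideal, so by Lemma 1(iv) it suffices to prove $f\circ S \subseteq f$, i.e., to verify the fuzzy right ideal condition.

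First, I would substitute $f = f\circ f$ into the expression $f\circ S$, obtaining $f\circ S = (f\circ f)\circ S$. Next, I would apply the left invertive law in $F(S)$ (established in Proposition 1 of the paper) to rewrite this as $(S\circ f)\circ f$. Because $f$ is a fuzzy left ideal, Lemma 1(ii) gives $S\circ f \subseteq f$, and monotonicity of the fuzzy product $\circ$ (which is immediate since $(g\circ k)(x) = \bigvee_{x=yz}\{g(y)\wedge k(z)\}$ is nondecreasing in $g$ and $k$) then yields $(S\circ f)\circ f \subseteq f\circ f = f$. Stringing these together, $f\circ S \subseteq f$, so by Lemma 1(iii) $f$ is a fuzzy right ideal, hence a fuzzy two-sided ideal.

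There is essentially no obstacle here beyond being careful about the direction of the inclusion and the monotonicity of $\circ$, both of which are routine. The only mild subtlety is that we need Proposition 1 to apply the left invertive law at the level of fuzzy subsets; that is why the proof rearranges via $(f\circ f)\circ S = (S\circ f)\circ f$ rather than trying to use associativity (which is not available in an AG-groupoid). Idempotence of $f$ is used exactly once, to inject an extra copy of $f$ that can then absorb the $S$-factor through the left invertive law.
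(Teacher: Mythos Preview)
Your proof is correct and follows exactly the same route as the paper: the paper's one-line argument is $f\circ S=(f\circ f)\circ S=(S\circ f)\circ f\subseteq f\circ f=f$, which is precisely your chain with the justifications (Proposition~1 for the left invertive law, Lemma~1 for the ideal characterizations, monotonicity of $\circ$) made explicit.
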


\begin{proof}
Let $f$ be a fuzzy left ideal in $S$, which is idempotent. Consider, $f\circ
S=(f\circ f)\circ S=(S\circ f)\circ f\subseteq f\circ f=f.$
\end{proof}

\begin{remark}
If $S$ is an AG-groupoid with left identity, then in above proposition fuzzy
left ideal and fuzzy right ideal coincide.
\end{remark}

\begin{theorem}
Let $f$ is fuzzy idempotent in AG-groupoid $S$ with left identity, then
followings are true.
\end{theorem}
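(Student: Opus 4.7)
The plan is to ride the fuzzy idempotence $f\circ f=f$ through the three structural tools the left identity buys us: Lemma 3 ($S\circ S=S$), the swap $f\circ(g\circ h)=g\circ(f\circ h)$ from Theorem 1(i), and the full reversal $(f\circ g)\circ(h\circ k)=(k\circ h)\circ(g\circ f)$ from Theorem 1(ii). Before attacking any specific subclaim, I would install two reusable absorption rewrites: the left invertive law gives $f\circ g=(f\circ f)\circ g=(g\circ f)\circ f$ for every fuzzy subset $g$, and Theorem 1(i) symmetrically gives $g\circ f=g\circ (f\circ f)=f\circ(g\circ f)$. These let a copy of $f$ be inserted or absorbed on either side of any product, which is the whole game.

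For the ideal-flavored items one expects here (such as $S\circ f=f$, $f\circ S=f$, $(f\circ S)\circ f=f$, $f\circ(S\circ f)=f$, or the assertion that $S\circ f$ and $f\circ S$ are themselves fuzzy idempotent), the template is uniform: expand $f$ as $f\circ f$, push a distinguished $S$-factor past one copy of $f$ using Theorem 1(i) or the left invertive law, collapse any $S\circ S$ block that appears via Lemma 3, and finally apply $f\circ f=f$ to cash in. For quadratic identities of the form $(f\circ g)\circ(h\circ k)$ I would reach for Theorem 1(ii) first to mirror the whole expression, bring the two copies of $f$ (or the two copies of $S$) into adjacent position by medial rearrangements from Corollary 1, and then collapse them.

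The genuine obstacle is not depth but bookkeeping. In this non-associative setting the same three- or four-letter word admits many inequivalent bracketings, and the one creative step is to pick the rewrite path that actually parks two copies of $f$ side by side so idempotence fires, rather than chasing one's tail through equivalent forms. My strategy to contain this is to keep the product length bounded at four letters throughout, alternating the medial law of Corollary 1 with the Theorem 1(ii) reversal until two $f$'s are adjacent, at which point each individual subclaim should drop out in one or two lines.
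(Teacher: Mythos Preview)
Your strategic toolkit is the right one, and for claim~(i) --- that $S\circ f$ is idempotent --- your plan coincides with the paper's: one application of the medial law (Corollary~1) and Lemma~3 collapses $(S\circ f)\circ(S\circ f)=(S\circ S)\circ(f\circ f)=S\circ f$.

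There are two genuine problems, however. First, several of the subclaims you anticipate are simply false for a bare fuzzy idempotent: $S\circ f=f$, $f\circ S=f$, and $(f\circ S)\circ f=f$ all require $f$ to be a one-sided ideal, which is \emph{not} assumed here. Your template of ``push $S$ past $f$ and collapse $S\circ S$'' will not produce these equalities, because the only collapse you have is $f\circ f=f$ and $S\circ S=S$; there is no way to turn $S\circ f$ back into $f$. So be careful not to list targets your method cannot reach.

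Second, the actual claim~(ii) is that $f$ commutes with every fuzzy \emph{left ideal} $g$, and this is where your plan has a gap. Your two absorption rewrites $f\circ g=(g\circ f)\circ f$ and $g\circ f=f\circ(g\circ f)$ are exactly the paper's opening moves, but from there the argument is not a further rearrangement of $f$'s: the paper finishes by enlarging an inner $f$ to $S$ (via $f\subseteq S$) and then invoking the left-ideal hypothesis on $g$ to contract. Concretely it runs
\[
f\circ g=(g\circ f)\circ f\subseteq (g\circ S)\circ f\subseteq g\circ f,\qquad
g\circ f=f\circ(g\circ f)\subseteq f\circ(g\circ S)\subseteq f\circ g.
\]
Nothing in your outline brings in the hypothesis $S\circ g\subseteq g$ (equivalently $g\circ S\subseteq g$ after the appropriate rewrite), and without it the chain cannot close: rearranging four-letter words in $f,g,S$ alone will never shrink the expression, only permute it. The creative step you flagged --- parking two copies of $f$ side by side --- is necessary but not sufficient here; you also need one monotone estimate coming from $g$, and that is the piece your plan is missing.
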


\textit{(i)} $S\circ f$ is an idempotent.

\textit{(ii)} Every fuzzy left ideal $g$ in $S$ commutes with $f.$

\begin{proof}
\textit{(i)} can easily be shown by use of corollary 1 and lemma 3.

For \textit{(ii)} consider, $f\circ g=\left( f\circ f\right) \circ g=\left(
g\circ f\right) \circ f\subseteq \left( g\circ S\right) \circ f\subseteq
g\circ f.$ Also, $g\circ f=g\circ (f\circ f)=f\circ (g\circ f)\subseteq
f\circ \left( g\circ S\right) \subseteq f\circ g.$
\end{proof}

\begin{theorem}
Let $S$ be an AG-groupoid with left identity, then the collection of all
fuzzy left ideals of $S$, which are idempotent forms a commutative monoid.
\end{theorem}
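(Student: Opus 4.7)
The plan is to verify the monoid axioms for the set $L$ of idempotent fuzzy left ideals of $S$. First I would note that $L$ is nonempty: taking $S$ itself (viewed as the constant-$1$ fuzzy subset of $S$), Lemma 3 gives $S\circ S=S$, and $S$ is trivially a fuzzy left ideal of itself, so $S\in L$.

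Next I would prove closure. Given $f,g\in L$, the medial law from Corollary 1 yields $(f\circ g)\circ(f\circ g)=(f\circ f)\circ(g\circ g)=f\circ g$, so $f\circ g$ is idempotent. To see that $f\circ g$ is also a fuzzy left ideal, I would apply Theorem 1(i) together with Lemma 4: $S\circ(f\circ g)=f\circ(S\circ g)=f\circ g$, and Lemma 1(ii) then delivers the fuzzy left ideal property.

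Commutativity of $L$ follows directly from Theorem 4(ii): each element of $L$ is simultaneously a fuzzy left ideal and a fuzzy idempotent, so the theorem yields $f\circ g=g\circ f$ for all $f,g\in L$. With commutativity in hand, associativity follows from the left invertive law in $F(S)$ (which holds by Proposition 1): $(f\circ g)\circ h=(h\circ g)\circ f=(g\circ h)\circ f=f\circ(g\circ h)$, where commutativity is used at the second and last steps (the last step using the closure established above to ensure $g\circ h\in L$).

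Finally, for the identity, $S\in L$ and Lemma 4 gives $S\circ f=f$ for every fuzzy left ideal $f$; by the commutativity just established, $f\circ S=f$ as well, so $S$ is a two-sided identity in $L$. The main technical step I expect to be the verification that $f\circ g$ remains a fuzzy left ideal, since it does not follow directly from the definition but requires threading Theorem 1(i) through Lemma 4; once this is done, the remaining axioms drop out from the medial law, Theorem 4(ii), and the left invertive law.
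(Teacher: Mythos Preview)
Your proposal is correct and follows essentially the same outline as the paper's proof: nonemptiness via $S\in L$, idempotence of $f\circ g$ via the medial law, associativity from the left invertive law plus commutativity, and $S$ as identity via Lemma~4. The only noteworthy differences are local: for the left-ideal closure the paper writes $S\circ(f\circ g)=(S\circ S)\circ(f\circ g)=(S\circ f)\circ(S\circ g)\subseteq f\circ g$ using Lemma~3 and the medial law, whereas you use Theorem~1(i) with Lemma~4; and for commutativity the paper argues directly via $f\circ g=(f\circ g)\circ(f\circ g)=(g\circ f)\circ(g\circ f)=(g\circ g)\circ(f\circ f)=g\circ f$ using Theorem~1(ii) and the medial law, whereas you appeal to Theorem~4(ii). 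Both routes are valid and equally short, so the approaches are substantively the same.
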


\begin{proof}
Let $\hat{H}$ denote the collection of all fuzzy left ideals which are
idempotent in $S$. Here, $\hat{H}$ is non-empty, since by lemma 3, $S\circ
S=S$ implies $S$ is in $\hat{H}$. Consider $f,g$ in $\hat{H}$, then $(f\circ
g)\circ (f\circ g)=(f\circ f)\circ (g\circ g)=f\circ g,$ also by corollary
1, $S\circ (f\circ g)=(S\circ S)\circ (f\circ g)=(S\circ f)\circ (S\circ
g)\subseteq f\circ g$. Also for every $f,g$ in $\hat{H}$ by use of theorem 1
and corollary 1, $f\circ g=(f\circ g)\circ (f\circ g)=(g\circ f)\circ
(g\circ f)=(g\circ g)\circ (f\circ f)=g\circ f,$ that is commutative law
holds in $\hat{H}$. Now, for any $f,g,$ and$\ h$ in $\hat{H}$ we have $%
(f\circ g)\circ h=(h\circ g)\circ f=f\circ (h\circ g)=f\circ (g\circ h).$
Since every $f$ in $\hat{H}$ is left ideal, so lemma 4 implies that $S\circ
f=f.$ Commutativity implies that $S\circ f=f\circ S=f,$ which implies that $%
S $ is identity in $\hat{H}$ and every $f$ is an ideal in $\hat{H}$.
\end{proof}

\begin{lemma}
Let $S$ be an AG-groupoid with left identity $e$. Then every fuzzy right
ideal is fuzzy ideal.
\end{lemma}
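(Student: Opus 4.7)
The plan is to show that an arbitrary fuzzy right ideal $f$ automatically satisfies the fuzzy left ideal condition $f(ab)\geq f(b)$ for all $a,b\in S$; combined with the given right ideal inequality $f(ab)\geq f(a)$, this makes $f$ a fuzzy two-sided ideal.

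The key algebraic observation I would exploit is that in an AG-groupoid with left identity $e$, any product $ab$ can be rewritten as $ab = (ea)b = (ba)e$ by one application of the left invertive law. This converts the problem of bounding $f(ab)$ from below by $f(b)$ into the problem of bounding $f\bigl((ba)e\bigr)$ from below by $f(b)$, and the latter is tailor-made for the fuzzy right ideal hypothesis.

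With that rewriting in hand, I would apply the fuzzy right ideal condition twice: first to the outer product $(ba)\cdot e$, obtaining $f\bigl((ba)e\bigr)\geq f(ba)$, and then to the inner product $b\cdot a$, obtaining $f(ba)\geq f(b)$. Chaining these gives
\[
f(ab) \;=\; f\bigl((ba)e\bigr) \;\geq\; f(ba) \;\geq\; f(b),
\]
which is exactly the fuzzy left ideal property. Since $f$ was already assumed to be a fuzzy right ideal, this shows $f$ is a fuzzy ideal.

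There is no real obstacle here beyond spotting the identity $ab=(ba)e$; once that is written down, the proof is a one-line double application of the right ideal definition. I would simply present the calculation and cite the left invertive law for the single nontrivial equality.
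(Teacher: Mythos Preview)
Your proof is correct and rests on the same algebraic identity as the paper's, namely the rewriting that comes from inserting the left identity and applying the left invertive law once. The only difference is the level of abstraction: the paper argues with the fuzzy product, writing $S\circ f=(S\circ S)\circ f=(f\circ S)\circ S\subseteq f\circ S\subseteq f$ via Lemma~3 and Proposition~1, whereas you carry out the equivalent computation pointwise as $f(ab)=f((ea)b)=f((ba)e)\geq f(ba)\geq f(b)$. Your version is more elementary and self-contained (it needs no prior lemmas about $F(S)$), while the paper's version illustrates how the structural results about $(F(S),\circ)$ streamline later arguments; substantively the two are the same proof.
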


\begin{proof}
Let $f$ be a fuzzy right ideal in $S$, so $f\circ S\subseteq f.$ By use of
lemma 3 and proposition 1, $S\circ f=\left( S\circ S\right) \circ f=\left(
f\circ S\right) \circ S\subseteq f\circ S\subseteq f.$ So $f$ is a fuzzy
left ideal, and hence a fuzzy ideal in $S$.
\end{proof}

\begin{remark}
If $f$ is a fuzzy right ideal of an AG-groupoid $S$ with left identity then $%
f\cup (S\circ f)$ and $f\cup (f\circ f)$ are fuzzy two-sided ideals of $S.$
\end{remark}

\begin{lemma}
If $f$ is a fuzzy left ideal of an AG-groupoid $S$ with left identity then $%
f\cup (f\circ S)$ \ and $f\cup (f\circ f)$ are fuzzy two-sided ideals of $S.$
\end{lemma}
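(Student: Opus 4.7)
The strategy is to apply Lemma~1 and verify, for each of $g_{1}=f\cup(f\circ S)$ and $g_{2}=f\cup(f\circ f)$, that $S\circ g_{i}\subseteq g_{i}$ and $g_{i}\circ S\subseteq g_{i}$, whence each $g_{i}$ is simultaneously a fuzzy left ideal and a fuzzy right ideal. Because $\circ$ distributes over $\cup$, every such inclusion splits into two smaller containments, which I would handle with the identities already established in the paper: the left invertive law for $(F(S),\circ)$ (Proposition~1), the identity $f\circ(g\circ h)=g\circ(f\circ h)$ of Theorem~1(i), the medial law (Corollary to Proposition~1), $S\circ S=S$ (Lemma~3), and $S\circ f=f$ for the fuzzy left ideal $f$ (Lemma~4).

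For $g_{1}$: to check $S\circ g_{1}\subseteq g_{1}$, write $S\circ g_{1}=(S\circ f)\cup(S\circ(f\circ S))$; the first summand sits inside $f$ by Lemma~1(ii), and the second rewrites as $f\circ(S\circ S)=f\circ S$ via Theorem~1(i) and Lemma~3. To check $g_{1}\circ S\subseteq g_{1}$, expand $g_{1}\circ S=(f\circ S)\cup((f\circ S)\circ S)$; the first summand is already in $g_{1}$, while the second collapses to $(S\circ S)\circ f=S\circ f\subseteq f$ via the left invertive law together with Lemma~3 and Lemma~1(ii).

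For $g_{2}$: the left-ideal computation is analogous, with the nontrivial summand $S\circ(f\circ f)$ rewriting as $f\circ(S\circ f)=f\circ f$ by Theorem~1(i) and Lemma~4. For the right-ideal side, $(f\circ f)\circ S=(S\circ f)\circ f=f\circ f$ by the left invertive law and Lemma~4. The residual piece $f\circ S$ is where the main obstacle lies: since $f\circ f\subseteq S\circ f\subseteq f$, the set $g_{2}$ in fact collapses onto $f$, so the claim reduces to asserting that the fuzzy left ideal $f$ also absorbs $S$ on the right. I would attempt to push this through by writing $f\circ S=(S\circ f)\circ(S\circ S)$ via Lemmas~3 and~4, then applying the medial law to rearrange into $(S\circ S)\circ(f\circ S)=S\circ(f\circ S)$, and finally invoking Theorem~1(i) to bring the expression back to a form absorbed by $f$. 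The delicate point throughout is that the chain of AG-identities is symmetric enough to be circular, so care is needed in the order in which Lemmas~3 and~4 are deployed.
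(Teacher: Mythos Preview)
Your treatment of $g_{1}=f\cup(f\circ S)$ is correct. The paper takes a slight shortcut here: it only verifies that $g_{1}$ is a fuzzy right ideal (via essentially your computation $(f\circ S)\circ S=(S\circ S)\circ f=S\circ f=f$) and then invokes Lemma~5 to obtain the left-ideal property for free, whereas you check both sides directly. Either route works.

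For $g_{2}=f\cup(f\circ f)$ your diagnosis is exactly right, and your hesitation is well placed. Since $f\circ f\subseteq S\circ f\subseteq f$, one has $g_{2}=f$, so the assertion that $g_{2}$ is two-sided collapses to the bare claim $f\circ S\subseteq f$ for an arbitrary fuzzy left ideal $f$. Your proposed chain $f\circ S=(S\circ f)\circ(S\circ S)=(S\circ S)\circ(f\circ S)=S\circ(f\circ S)$ is valid but, as you suspect, circular: it only shows that $f\circ S$ is again a fuzzy left ideal, not that it lies inside $f$. No combination of Proposition~1, Theorem~1, Lemma~3, and Lemma~4 forces $f\circ S\subseteq f$ for a generic fuzzy left ideal.

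The paper's own argument for $g_{2}$ does not escape this obstacle. After reaching $(f\circ S)\cup(f\circ f)$ it asserts the equality $(f\circ S)\cup(f\circ f)=(f\circ f)\cup(S\circ f)$, which would require $f\circ S=S\circ f\ (=f)$ --- precisely the statement in question. So you have correctly located a genuine gap, and the paper's proof does not close it either.
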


\begin{proof}
Consider, $(f\cup (f\circ S))\circ S=\left( f\circ S\right) \cup (\left(
f\circ S\right) \circ S)=(f\circ S)\cup ((S\circ S)\circ f)=(f\circ S)\cup
(S\circ f)=(f\circ S)\cup f=f\cup (f\circ S).$ Hence $f\cup (f\circ S)$ is
fuzzy right ideal of $S$, and by lemma 5, $f\cup (f\circ S)$ is fuzzy
two-sided ideal of $S.$ Now $(f\cup (f\circ f))\circ S=\left( f\circ
S\right) \cup \left( f\circ f\right) \circ S=(f\circ S)\cup ((S\circ f)\circ
f)\subseteq (f\circ S)\cup (f\circ f)=(f\circ f)\cup (S\circ f)\subseteq
(f\circ f)\cup f=f\cup (f\circ f),$ implies that $f\cup (f\circ f)$ is fuzzy
right ideal of $S$. By lemma 5, $f\cup (f\circ f)$ is fuzzy left ideal of $S$%
.
\end{proof}

\bigskip

A fuzzy subset $f$ of an AG-groupoid $S$ is called a fuzzy bi-ideal of $S$
if $f((xy)z)\geq f(x)\wedge f(z)$ for all $x,y$ and $z$ of $S$. It is easy
to see from [1] that for a non-empty subset $A$ of an AG-groupoid $S$ is a
bi-ideal of $S$ if and only if $C_{A}$ is a fuzzy bi-ideal of $S.$ The
following lemma has proof as in [1].

\begin{lemma}
Let $f$ be a fuzzy AG-subgroupoid of an AG-groupoid $S$. Then $f$ is a fuzzy
bi-ideal of $S$ if and only if $(f\circ S)\circ f\subseteq f.$
\end{lemma}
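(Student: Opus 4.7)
The plan is to prove both implications directly from the definition of the fuzzy product, unpacking $(f\circ S)\circ f$ into a double supremum and then matching indices against the bi-ideal inequality $f((xy)z)\geq f(x)\wedge f(z)$.

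For the forward direction, I would assume $f$ is a fuzzy bi-ideal and show that $((f\circ S)\circ f)(a)\leq f(a)$ for every $a\in S$. If $a$ is not expressible as a product, then $((f\circ S)\circ f)(a)=0$ and the inequality is trivial. Otherwise I would expand
\[
((f\circ S)\circ f)(a)=\bigvee_{a=yz}\Bigl\{\bigvee_{y=pq}\{f(p)\wedge S(q)\}\wedge f(z)\Bigr\}=\bigvee_{a=(pq)z}\{f(p)\wedge f(z)\},
\]
using $S(q)=1$ to collapse the middle factor. For each such decomposition $a=(pq)z$ the bi-ideal hypothesis gives $f(a)=f((pq)z)\geq f(p)\wedge f(z)$, so the supremum on the right is bounded above by $f(a)$.

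For the converse, I would assume $(f\circ S)\circ f\subseteq f$ and verify the bi-ideal inequality for arbitrary $x,y,z\in S$. The key idea is to feed in two specific decompositions at the two levels of the sup: first write $(xy)z=u\cdot v$ with $u=xy$ and $v=z$ to obtain $((f\circ S)\circ f)((xy)z)\geq (f\circ S)(xy)\wedge f(z)$, then write $xy=x\cdot y$ to get $(f\circ S)(xy)\geq f(x)\wedge S(y)=f(x)$. Chaining these with the inclusion hypothesis yields $f((xy)z)\geq ((f\circ S)\circ f)((xy)z)\geq f(x)\wedge f(z)$, as required.

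The only subtlety is bookkeeping for the double-sup expansion and the trivial ``not expressible as a product'' case, both handled by the convention built into the definition of $\circ$. Notably the left invertive law is not needed, and neither is the AG-subgroupoid assumption on $f$ actually invoked in the proof — it is present only because a bi-ideal is required to be an AG-subgroupoid to begin with. So I expect no real obstacle beyond carefully tracking the indices of the suprema.
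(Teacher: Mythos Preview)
Your argument is correct and is precisely the standard unpacking one expects. The paper does not actually supply its own proof of this lemma: it states only that ``the following lemma has proof as in [1]'' (Mordeson, \emph{Fuzzy semigroups}), and the argument carried over from that reference is exactly the direct double-supremum computation you give. Your side remark that neither the left invertive law nor the AG-subgroupoid hypothesis on $f$ is invoked is also accurate.
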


\begin{lemma}
Let $f$ and $g$ be fuzzy right ideals of an AG-groupoid $S$ with left
identity. Then $f\circ g$ and $g\circ f$ are fuzzy bi-ideals of $S.$
\end{lemma}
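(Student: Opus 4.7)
The plan is to verify the two defining conditions of a fuzzy bi-ideal for $f\circ g$, after which the corresponding statement for $g\circ f$ will follow by the analogous calculation with $f$ and $g$ interchanged. I will begin by invoking Lemma~5 to upgrade the fuzzy right ideals $f$ and $g$ to fuzzy two-sided ideals, which makes available all four inclusions $f\circ S\subseteq f$, $S\circ f\subseteq f$, $g\circ S\subseteq g$, and $S\circ g\subseteq g$. By Lemma~8 it then suffices to show that $f\circ g$ is a fuzzy AG-subgroupoid and that $((f\circ g)\circ S)\circ(f\circ g)\subseteq f\circ g$.

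The subgroupoid step is straightforward: I will apply the medial law in $F(S)$ (the Corollary to Proposition~1) to write $(f\circ g)\circ(f\circ g)=(f\circ f)\circ(g\circ g)$, and then absorb each factor with the ideal inclusions above, obtaining $f\circ f\subseteq f\circ S\subseteq f$ and $g\circ g\subseteq S\circ g\subseteq g$, so that the whole expression lies inside $f\circ g$.

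The substantive step is the bi-ideal inclusion. My plan is to apply Theorem~1(ii) to the whole expression, giving $((f\circ g)\circ S)\circ(f\circ g)=(g\circ f)\circ(S\circ(f\circ g))$; this migrates the isolated $S$ into the inner factor, where it can be absorbed using Lemma~3 and medial: $S\circ(f\circ g)=(S\circ S)\circ(f\circ g)=(S\circ f)\circ(S\circ g)\subseteq f\circ g$. Consequently the original expression is contained in $(g\circ f)\circ(f\circ g)$. I will then use Theorem~1(i) to rewrite this as $f\circ((g\circ f)\circ g)$, and conclude with the chain $(g\circ f)\circ g\subseteq(g\circ S)\circ g\subseteq g\circ g\subseteq S\circ g\subseteq g$ (again using the ideal properties of $g$), which forces $f\circ((g\circ f)\circ g)\subseteq f\circ g$ and completes the verification for $f\circ g$.

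The main obstacle, as I anticipate, is locating the right sequence of non-associative identities. Without associativity there is no canonical form for these products, and the nonobvious move is applying Theorem~1(ii) to the entire outer product so that the stray $S$ ends up in a position where the ideal properties of $f$ and $g$ can actually absorb it. Once that step is found, the rest is routine bookkeeping with the medial, left-invertive, and left-identity laws already established in $F(S)$.
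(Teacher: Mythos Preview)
Your proposal is correct, and the AG-subgroupoid step is essentially identical to the paper's. The bi-ideal step, however, is handled differently. The paper does not invoke Lemma~5 at all; instead it expands $S=S\circ S$ \emph{inside the left factor}, so that
\[
((f\circ g)\circ S)\circ(f\circ g)=((f\circ g)\circ(S\circ S))\circ(f\circ g)=((f\circ S)\circ(g\circ S))\circ(f\circ g)\subseteq (f\circ g)\circ(f\circ g)\subseteq f\circ g,
\]
using only the medial law and the right-ideal inclusions $f\circ S\subseteq f$, $g\circ S\subseteq g$. Your route---Theorem~1(ii) followed by absorption of $S\circ(f\circ g)$ and then Theorem~1(i)---works, but it requires the upgrade to two-sided ideals via Lemma~5 precisely because the $S$ ends up on the left of $f$ and $g$. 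That upgrade is legitimate (so nothing is wrong), but it is extra machinery the paper avoids; the paper's placement of $S\circ S$ keeps $S$ on the right throughout, so the bare right-ideal hypothesis suffices and the argument is one line. A minor point: the characterization of fuzzy bi-ideals you cite is Lemma~7 in the paper's numbering, not Lemma~8 (Lemma~8 is the very statement you are proving).
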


\begin{proof}
By use of corollary 1 $(f\circ g)\circ (f\circ g)=(f\circ f)\circ (g\circ
g)\subseteq f\circ g.$ Hence $f\circ g$ is fuzzy AG-subgroupoid of $S$. Now,
by proposition 1, lemma 3 and corollary 1 $((f\circ g)\circ S))\circ (f\circ
g)=((f\circ g)\circ (S\circ S))\circ (f\circ g)=((f\circ S)\circ (g\circ
S))\circ (f\circ g)\subseteq (f\circ g)\circ (f\circ g)\subseteq f\circ g.$
Similarly, $g\circ f$ is a bi-ideal.
\end{proof}

\begin{lemma}
Let $f$ and $g$ be fuzzy bi-ideals of an AG-groupoid $S$. Then $f\cap g$ is
a fuzzy bi-ideal of $S.$
\end{lemma}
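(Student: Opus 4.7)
The plan is to verify directly, from the definitions, that $f \cap g$ meets both requirements of a fuzzy bi-ideal: it must be a fuzzy AG-subgroupoid, and it must satisfy the inequality $h((xy)z) \geq h(x) \wedge h(z)$ for all $x,y,z \in S$.

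First I would handle the AG-subgroupoid condition. Since every fuzzy bi-ideal is by definition a fuzzy AG-subgroupoid, both $f$ and $g$ are fuzzy AG-subgroupoids of $S$. Applying Lemma 2(i) directly yields that $f \cap g$ is a fuzzy AG-subgroupoid of $S$, so nothing more needs to be said for this part.

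Next I would verify the bi-ideal inequality for $f \cap g$. Fix arbitrary $x,y,z \in S$ and unfold the definition of intersection pointwise: $(f \cap g)((xy)z) = f((xy)z) \wedge g((xy)z)$. Using the bi-ideal hypothesis for $f$ and $g$ separately, $f((xy)z) \geq f(x) \wedge f(z)$ and $g((xy)z) \geq g(x) \wedge g(z)$, and taking the meet of these two lower bounds gives
$$(f \cap g)((xy)z) \;\geq\; \bigl(f(x) \wedge f(z)\bigr) \wedge \bigl(g(x) \wedge g(z)\bigr).$$
Reassociating and commuting the $\wedge$ operations turns the right side into $\bigl(f(x) \wedge g(x)\bigr) \wedge \bigl(f(z) \wedge g(z)\bigr) = (f \cap g)(x) \wedge (f \cap g)(z)$, which is exactly what is required.

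There is essentially no obstacle in this argument: both clauses follow by pointwise manipulation of $\wedge$, together with one invocation of Lemma 2(i). The only subtle point worth remembering is that the bi-ideal definition implicitly carries the AG-subgroupoid condition, so this must be checked in addition to the $(xy)z$ inequality; once that is noted, the proof is a short, routine computation.
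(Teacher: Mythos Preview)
Your argument is correct and is precisely the standard pointwise verification one expects. The paper itself does not write out a proof at all: it simply says ``It is same as in [1]'' (Mordeson's \textit{Fuzzy Semigroups}), and your computation is exactly the routine argument that reference supplies. One small caveat: in this paper the sentence defining a fuzzy bi-ideal only states the inequality $f((xy)z)\geq f(x)\wedge f(z)$ and does not explicitly require the fuzzy AG-subgroupoid condition, so your appeal to Lemma~2(i) may be superfluous under the paper's literal definition---but including it does no harm and covers the stricter convention used elsewhere.
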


\begin{proof}
It is same as in [1].
\end{proof}

\bigskip

A fuzzy subset $f$ of an AG-groupoid $S$ is called a fuzzy interior ideal of 
$S$ if $f((xa)y)\geq f(a)$ for all $x,a$ and $y$ of $S.$ It can easily seen
that, if $A$ be a non-empty subset of an AG-groupoid $S$, then $A$ is a
interior ideal of $S$ if and only if $C_{A}$ is a fuzzy interior ideal of $%
S. $ The following lemma has the proof as in [1].

\begin{lemma}
Let $f$ be a fuzzy AG-subgroupoid of an AG-groupoid $S$. Then $f$ is a fuzzy
interior ideal of $S$ if and only if $(S\circ f)\circ S\subseteq f.$
\end{lemma}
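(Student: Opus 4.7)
The plan is to mimic the template used for Lemma 1 and Lemma 8: unfold $(S\circ f)\circ S$ pointwise using the definition of the fuzzy product, and then translate the resulting inequalities back into the defining condition of a fuzzy interior ideal. I handle the two implications separately.

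For the forward direction, assume $f((xa)y)\geq f(a)$ for all $x,a,y\in S$ and fix an arbitrary $w\in S$. Unfolding the definition twice, the nonzero contributions to $((S\circ f)\circ S)(w)$ arise only from factorisations of the form $w=(pq)z$, and since $S(x)=1$ for every $x\in S$ the value simplifies to
$$((S\circ f)\circ S)(w)=\bigvee_{w=(pq)z} f(q).$$
Applying the interior-ideal hypothesis with the roles $x=p$, $a=q$, $y=z$ gives $f(w)=f((pq)z)\geq f(q)$ for each such factorisation, so the supremum on the right is bounded above by $f(w)$, as required.

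For the converse, assume $(S\circ f)\circ S\subseteq f$ and fix $x,a,y\in S$. Reading off the definition of $\circ$ at the particular factorisations $(xa)y=(xa)\cdot y$ and $xa=x\cdot a$,
$$((S\circ f)\circ S)((xa)y)\geq S(x)\wedge f(a)\wedge S(y)=f(a).$$
The containment hypothesis then yields $f((xa)y)\geq f(a)$, which is exactly the fuzzy interior ideal condition.

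The main obstacle is bookkeeping rather than conceptual: in the forward direction one must dispose of the degenerate cases where $w$ or its left factor fails to be a product of two elements in $S$, in which case $((S\circ f)\circ S)(w)=0$ and the inequality holds trivially. It is worth noting that neither the AG-subgroupoid hypothesis on $f$ nor the existence of a left identity in $S$ is actually used in either implication, exactly as in the analogous Lemma 8.
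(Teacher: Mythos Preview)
Your proof is correct; the pointwise unfolding of $(S\circ f)\circ S$ together with the trivial handling of non-factorisable elements is exactly the standard argument, and your observation that the AG-subgroupoid hypothesis on $f$ is unused is accurate. The paper itself does not supply a proof here but simply defers to reference~[1], so your direct verification is precisely the argument being invoked.
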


\begin{proposition}
Let $S$ be an AG-groupoid. Then for any fuzzy left ideal, which is
idempotent, in $S$, the following properties hold.
\end{proposition}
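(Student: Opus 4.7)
The plan is to exploit the two hypotheses on $f$ --- that $f$ is a fuzzy left ideal, so that $S \circ f \subseteq f$ by Lemma 1(ii), and that $f$ is idempotent, so that $f \circ f = f$ --- in combination with the left invertive law in $(F(S), \circ)$ from Proposition 1 and the medial law in $F(S)$ from Corollary 1. A useful preliminary observation to record up front is that by Proposition 3, any such $f$ is automatically a fuzzy two-sided ideal, so $f \circ S \subseteq f$ is also available throughout; this doubles the freedom one has to absorb an $S$-factor on either side of a product.

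For each individual property the template I would follow is the same. Unfold the product on the left-hand side using Proposition 1 to position the parentheses conveniently, then insert copies of $f$ via $f = f \circ f$ so as to create enough factors for the medial law to apply. A typical containment such as $(f \circ S) \circ f \subseteq f$, if it appears, is handled by rewriting $(f \circ S) \circ f = (f \circ f) \circ S = f \circ S \subseteq f$ using the left invertive law together with idempotency and the two-sided ideal property. Equalities of the form $f \circ g = g \circ f$ for another (idempotent) fuzzy left ideal $g$ would follow the pattern used in Theorem 4: substitute $f \circ f$ for $f$ and $g \circ g$ for $g$, redistribute via Corollary 1, and recollect using Proposition 1.

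The main obstacle I anticipate is purely combinatorial bookkeeping. The absence of associativity forces every reparenthesization step to be justified by the left invertive law, the medial law, or Corollary 2, and it is easy to silently apply one of these at a point where none is in fact valid. Crucially, no left identity is assumed in the hypothesis of this proposition, so the tools of Lemma 3, Lemma 4, and Theorem 2 are off-limits here, and every reduction must come from the interplay between $f \circ f = f$ and $S \circ f \subseteq f$ alone. Accordingly, the delicate step in each item will be matching the desired rearrangement to the exact identity in $F(S)$ that justifies it, and in particular resisting the temptation to treat $\circ$ as associative when the chain of parentheses becomes long.
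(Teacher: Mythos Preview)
Your strategy is sound, and invoking Proposition~3 up front is a genuine shortcut the paper does not take. The two properties in question are that $f$ is a fuzzy bi-ideal (i.e.\ $(f\circ S)\circ f\subseteq f$) and that $f$ is a fuzzy interior ideal (i.e.\ $(S\circ f)\circ S\subseteq f$). With $f\circ S\subseteq f$ available from Proposition~3 both become one-step absorptions: $(f\circ S)\circ f\subseteq f\circ f=f$ and $(S\circ f)\circ S\subseteq f\circ S\subseteq f$. The paper instead works only from the left-ideal hypothesis, arguing for (i) via the medial law, $(f\circ S)\circ f=(f\circ S)\circ(f\circ f)=(f\circ f)\circ(S\circ f)\subseteq f\circ f=f$, and for (ii) via the left invertive law, $(S\circ f)\circ S\subseteq f\circ S=(f\circ f)\circ S=(S\circ f)\circ f\subseteq f\circ f=f$. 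Your route is shorter precisely because Proposition~3 has already packaged the key rearrangement.

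That said, your sample computation contains exactly the slip you warn yourself against. You claim $(f\circ S)\circ f=(f\circ f)\circ S$ ``using the left invertive law together with idempotency'', but the left invertive law $(a\circ b)\circ c=(c\circ b)\circ a$ with $a=c=f$ and $b=S$ sends $(f\circ S)\circ f$ to itself, and neither that law nor the medial law produces $(f\circ f)\circ S$ from it in a single move; what the left invertive law does give is $(f\circ f)\circ S=(S\circ f)\circ f$, a different expression. On your Proposition~3 route this equality is unnecessary anyway --- just absorb $f\circ S\subseteq f$ directly --- so the error is harmless to the overall plan, but it is precisely the bookkeeping hazard you flagged.
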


\textit{(i)} $f$ is a fuzzy bi-ideal.

\textit{(ii)} $f$ is a fuzzy interior ideal.

\begin{proof}
\textit{(i)} Since a fuzzy subset $f$ of $S$ is fuzzy left ideal so $f\circ
f\subseteq f.$ By use of corollary 1 $(f\circ S)\circ f=(f\circ S)\circ
(f\circ f)=(f\circ f)\circ (S\circ f)\subseteq f\circ f=f.$

\textit{(ii)} Consider, $(S\circ f)\circ S\subseteq f\circ S=(f\circ f)\circ
S=(S\circ f)\circ f\subseteq f\circ f=f,$ implies that $f$ is an interior
ideal of $S.$
\end{proof}

\begin{lemma}
Every fuzzy subset $f$ of an AG-groupoid $S$ with left identity is right
ideal if and only if $f$ is an interior ideal.
\end{lemma}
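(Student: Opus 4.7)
The plan is to prove both implications directly, leveraging the previously established fact (the lemma showing that every fuzzy right ideal in an AG-groupoid with left identity is already a fuzzy two-sided ideal).

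For the easier direction, assuming $f$ is a fuzzy interior ideal, I would pick an arbitrary product $ab\in S$ and use the left identity $e$ to rewrite it as $ab=(ea)b$. Applying the interior-ideal inequality $f((xa)y)\geq f(a)$ with $x=e$ and $y=b$ yields $f(ab)\geq f(a)$ at once, which is the defining inequality of a fuzzy right ideal. No further structural identities are needed here.

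For the converse direction, the naive attempt gets stuck: starting only from $f(pq)\geq f(p)$ together with the left invertive law $(xa)y=(ya)x$, one obtains at best $f((xa)y)\geq f(ya)\geq f(y)$; similar rearrangements via the medial law or $(ab)(cd)=(dc)(ba)$ produce $f(x)$ or $f(y)$ lower bounds, never $f(a)$. This is the main obstacle, and it is resolved precisely by invoking the earlier lemma: in the presence of a left identity, a fuzzy right ideal is automatically a fuzzy left ideal, so one also has $f(pq)\geq f(q)$. Then the chain $f((xa)y)=f((ya)x)\geq f(ya)\geq f(a)$ goes through by applying, in order, the left invertive law, the right-ideal property, and the left-ideal property.

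Summing up, the calculations on each side are short; the substantive content is that in this setting the hypothesis ``fuzzy right ideal'' already packages two-sidedness, so the interior-ideal condition amounts to combining both ideal inequalities with a single application of the left invertive law.
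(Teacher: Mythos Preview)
Your argument is correct, and the easier direction (interior ideal $\Rightarrow$ right ideal) matches the paper verbatim. For the other direction, however, the paper does \emph{not} appeal to the earlier lemma that fuzzy right ideals are two-sided; it stays entirely within the right-ideal inequality $f(pq)\geq f(p)$. The chain is
\[
f((xa)y)\ \geq\ f(xa)\ =\ f((ex)a)\ =\ f((ax)e)\ \geq\ f(ax)\ \geq\ f(a),
\]
using the right-ideal property three times together with the rewrite $xa=(ex)a=(ax)e$ (left identity followed by the left invertive law). So your claim that rearrangements using only $f(pq)\geq f(p)$ ``never'' produce $f(a)$ is too pessimistic: the identity-plus-invertive trick swaps $x$ and $a$ inside the subterm and unlocks exactly the bound you need. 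Your detour through two-sidedness is perfectly valid since that lemma is already in hand, and it makes transparent what is really being used; the paper's version, by contrast, is more self-contained and avoids the dependency altogether.
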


\begin{proof}
Let every fuzzy subset $f$ of $S$ is right ideal. For $x,a$ and $y$ of $S,$
consider $f((xa)y)\geq f(xa)=f((ex)a)=f((ax)e)\geq f(ax)\geq f(a),$ which
implies that $f$ is an interior ideal. Conversely, for any $x$ and $y$ in $S$
we have, $f(xy)=f((ex)y)\geq f(x).$
\end{proof}

\begin{lemma}
Let $f$ be a fuzzy left ideal in an AG-groupoid $S$ with left identity, then 
$f$ being interior ideal is bi-ideal of $S$.
\end{lemma}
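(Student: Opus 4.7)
My plan is to verify the bi-ideal characterization from the earlier lemma, namely that a fuzzy AG-subgroupoid $f$ is a fuzzy bi-ideal iff $(f\circ S)\circ f\subseteq f$. So I need two things: that $f$ is a fuzzy AG-subgroupoid, and that $(f\circ S)\circ f\subseteq f$.

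First I would observe that $f$ is automatically a fuzzy AG-subgroupoid: since $f$ is a fuzzy left ideal, we have $S\circ f\subseteq f$, and since $f\subseteq S$ as noted in the text, this gives $f\circ f\subseteq S\circ f\subseteq f$, i.e.\ $f\circ f\subseteq f$, which by Lemma~1(i) means $f$ is a fuzzy AG-subgroupoid.

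Next, the key trick is to combine the left-ideal and interior-ideal hypotheses in the presence of a left identity. By Lemma~\ref{Sof=f}, because $f$ is a fuzzy left ideal in an AG-groupoid with left identity, $S\circ f=f$. Because $f$ is a fuzzy interior ideal, by the earlier interior-ideal lemma we have $(S\circ f)\circ S\subseteq f$. Substituting $S\circ f=f$ into the left-hand side gives $f\circ S\subseteq f$. In other words, the interior-ideal condition, when boosted by the existence of a left identity, actually upgrades $f$ to a fuzzy right ideal as a by-product.

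With $f\circ S\subseteq f$ and $f\circ f\subseteq f$ in hand, the bi-ideal inequality is immediate: $(f\circ S)\circ f\subseteq f\circ f\subseteq f$. Then Lemma~8 (the bi-ideal characterization) concludes that $f$ is a fuzzy bi-ideal of $S$. I do not anticipate a real obstacle here; the only conceptual step is spotting that $S\circ f=f$ converts the interior-ideal inclusion $(S\circ f)\circ S\subseteq f$ into $f\circ S\subseteq f$, after which everything collapses to one line.
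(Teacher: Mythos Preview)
Your argument is correct, and it differs meaningfully from the paper's proof. The paper works entirely elementwise: it first shows $f(xy)\geq f(x)\wedge f(y)$ by combining the left-ideal inequality $f(xy)\geq f(y)$ with the interior-ideal inequality $f(xy)=f((ex)y)\geq f(x)$, and then verifies $f((xy)z)\geq f(x)\wedge f(z)$ by rewriting $(xy)z$ via insertions of $e$ and the identities $a(bc)=b(ac)$ and $(ab)c=(cb)a$ so that the interior-ideal definition can be applied twice. Your route instead stays at the operator level: from Lemma~4 you get $S\circ f=f$, plug this into the interior-ideal characterization $(S\circ f)\circ S\subseteq f$ of Lemma~10 to obtain $f\circ S\subseteq f$, and then the bi-ideal characterization $(f\circ S)\circ f\subseteq f$ (which is Lemma~7 in the paper, not Lemma~8) falls out in one line. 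Your approach is shorter and makes the structural reason transparent---the interior-ideal hypothesis together with $S\circ f=f$ forces $f$ to be a fuzzy right ideal, from which the bi-ideal property is immediate---whereas the paper's approach avoids invoking Lemmas~4, 7, and~10 but pays for it with more bookkeeping. One small note: your AG-subgroupoid step $f\circ f\subseteq S\circ f\subseteq f$ uses only the left-ideal hypothesis, which is cleaner than the paper's derivation of the same fact (which already invokes the interior-ideal property at that stage).
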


\begin{proof}
Since $f$ is fuzzy left ideal in $S$, so $f(xy)\geq f(y)$ for all $x$ and $y$
in $S$. As $e$ is left identity in $S$. So, $f(xy)=f((ex)y)\geq f(x),$ which
implies that $f(xy)\geq f(x)\symbol{94}f(y)$ for all $x$ and $y$ in $S$.
Thus $f$ is fuzzy AG-subgroupoid. Consider, for any $x,y$ and $z$ in $S$, $%
f((xy)z)=f((x\left( ey\right) )z)=f((e(xy))z)\geq f(xy)=f((ex)y)\geq f(x)$.
Also $f((xy)z)=f((zy)x)=f((z(ey))x)=f((e(zy))x)\geq f(zy)=f((ez)y)\geq f(z).$
Hence $f((xy)z)\geq f(x)\symbol{94}f(z)$ for all $x,y$ and $z$ in $S$.
\end{proof}

\begin{proposition}
Let $f$ is a fuzzy subset of an AG-groupoid $S$ with left identity. If $f$
is a fuzzy left(right, two-sided) ideal in $S$ then $f\circ f$ is a fuzzy
ideal in $S.$
\end{proposition}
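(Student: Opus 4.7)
The plan is to apply Lemma 1(iv), which reduces the assertion that $f\circ f$ is a fuzzy ideal to the two containments $S\circ (f\circ f)\subseteq f\circ f$ and $(f\circ f)\circ S\subseteq f\circ f$. A useful preliminary is to observe that, in the presence of the left identity, these two containments are actually equivalent: by Lemma 3 and Theorem 1(ii),
\[
(f\circ f)\circ S=(f\circ f)\circ (S\circ S)=(S\circ S)\circ (f\circ f)=S\circ (f\circ f).
\]
So it suffices to verify just one of the two containments in each case.

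If $f$ is a fuzzy left ideal, Lemma 1(ii) gives $S\circ f\subseteq f$, and Theorem 1(i) rearranges $S\circ (f\circ f)=f\circ (S\circ f)\subseteq f\circ f$, which by the preliminary also yields $(f\circ f)\circ S\subseteq f\circ f$. If $f$ is a fuzzy right ideal, Lemma 1(iii) gives $f\circ S\subseteq f$, and the medial law (Corollary to Proposition 1) together with Lemma 3 yields
\[
(f\circ f)\circ S=(f\circ f)\circ (S\circ S)=(f\circ S)\circ (f\circ S)\subseteq f\circ f,
\]
again giving both containments. The two-sided case is subsumed by either.

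The main obstacle I expect is bookkeeping: Theorem 1 and Lemma 3 both rely on the left identity, and the left invertive law alone does not immediately reshuffle $S\circ (f\circ f)$ into a form that exposes $S\circ f$ or $f\circ S$. Once the symmetrising identity $(f\circ f)\circ S=S\circ (f\circ f)$ has been extracted, each case reduces to a single line, so the real content is selecting the right combination of Theorem 1(i), Theorem 1(ii), and the medial law to bring the ideal property to bear.
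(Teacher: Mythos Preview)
Your argument is correct. The symmetry identity $(f\circ f)\circ S=S\circ(f\circ f)$ you extract via Theorem~1(ii) is valid, and once that is in hand each case does reduce to a single containment exactly as you say.

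The paper's proof proceeds a little differently. For the left-ideal case it verifies both containments directly and independently: it gets $S\circ(f\circ f)\subseteq f\circ f$ by expanding $S=S\circ S$ and applying the medial law to obtain $(S\circ f)\circ(S\circ f)$, and it gets $(f\circ f)\circ S\subseteq f\circ f$ from the left invertive law (Proposition~1), which gives $(f\circ f)\circ S=(S\circ f)\circ f$. For the right-ideal case the paper does not argue directly at all: it simply invokes Lemma~5 (in an AG-groupoid with left identity every fuzzy right ideal is already a fuzzy ideal, hence a fuzzy left ideal) and falls back on the left-ideal case. Your route---first establishing the symmetry $(f\circ f)\circ S=S\circ(f\circ f)$ and then treating the right-ideal case on its own via the medial law---is a bit more uniform and avoids appealing to Lemma~5, at the cost of bringing in Theorem~1(ii). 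Both arguments draw on the same pool of identities; the difference is organisational rather than substantive.
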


\begin{proof}
Let $f$ is a fuzzy left ideal in an AG-groupoid $S$, then by lemma 1 $S\circ
f\subseteq f.$ By use of lemma 3 and corollary 1, $S\circ \left( f\circ
f\right) =\left( S\circ S\right) \circ \left( f\circ f\right) =\left( S\circ
f\right) \circ \left( S\circ f\right) \subseteq f\circ f.$ Also by
proposition 1, $\left( f\circ f\right) \circ S=\left( S\circ f\right) \circ
f\subseteq f\circ f.$ If $f$ is a fuzzy right ideal in $S$ then by lemma 5 $%
f $ is a fuzzy left ideal.
\end{proof}

\begin{corollary}
Let $f$ is a fuzzy subset of an AG-groupoid $S$ with left identity. If $f$
is a fuzzy left ideal in $S$ then $f\circ f$ is a fuzzy bi-ideal and an
interior ideal in $S.$
\end{corollary}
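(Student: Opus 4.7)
The plan is to let $g := f\circ f$ and invoke the immediately preceding Proposition, which already tells us that $g$ is a fuzzy two-sided ideal of $S$; so by Lemma 1, both $S\circ g\subseteq g$ and $g\circ S\subseteq g$ hold. The remaining work is simply to verify the inclusion characterizations of ``bi-ideal'' and ``interior ideal'' given in Lemmas 8 and 9, which both require $g$ to first be a fuzzy AG-subgroupoid.

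First I would establish the AG-subgroupoid property by computing $g\circ g = (f\circ f)\circ (f\circ f) \subseteq g\circ S\subseteq g$, so Lemma 1(i) gives $g\circ g\subseteq g$. Next, for the bi-ideal claim, I would chain $(g\circ S)\circ g\subseteq g\circ g\subseteq g$ using that $g$ is a right ideal and the AG-subgroupoid property just shown; Lemma 8 then yields that $g$ is a fuzzy bi-ideal. For the interior ideal claim, I would chain $(S\circ g)\circ S\subseteq g\circ S\subseteq g$ using that $g$ is a left ideal followed by that $g$ is a right ideal; Lemma 9 then yields that $g$ is a fuzzy interior ideal.

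There is essentially no obstacle here, since the preceding Proposition has already produced the two inclusions $S\circ g\subseteq g$ and $g\circ S\subseteq g$ that drive every step. The only thing to watch is the order of application of Lemmas 8 and 9, which both presuppose the AG-subgroupoid condition; for this reason I would record the inclusion $g\circ g\subseteq g$ explicitly before invoking either lemma, rather than folding it silently into the bi-ideal computation.
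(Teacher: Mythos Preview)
Your argument is correct. After invoking Proposition~6 to see that $g=f\circ f$ is a two-sided ideal, you verify the inclusion characterizations $(g\circ S)\circ g\subseteq g$ and $(S\circ g)\circ S\subseteq g$ directly. (A minor bookkeeping note: in the paper's numbering those characterizations are Lemma~7 and Lemma~10, not 8 and 9.)

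The paper takes a different route after Proposition~6: it cites Lemma~11 (in an AG-groupoid with left identity, a fuzzy subset is a right ideal iff it is an interior ideal) together with Lemma~12 (a fuzzy left ideal that is also an interior ideal is a bi-ideal). Your inclusion-chasing is in fact the argument the paper gives for Theorem~6, stated immediately after this corollary, which shows in general that every fuzzy ideal is a bi-ideal and an interior ideal. So your approach is self-contained and avoids the pointwise computations in Lemmas~11--12, whereas the paper's proof recycles those element-level lemmas already established; either way the work is minimal once Proposition~6 is in hand.
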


\begin{proof}
By proposition 6, $f\circ f$ is a fuzzy ideal in $S$. Now by lemmas 11 and
12, $f\circ f$ is a fuzzy interior and fuzzy bi-ideal of $S.$
\end{proof}

\begin{theorem}
In an AG-groupoid $S$, every fuzzy ideal is a fuzzy bi-ideal and an interior
ideal of $S.$
\end{theorem}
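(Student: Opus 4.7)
The plan is to argue directly from the pointwise definitions, using only the two defining inequalities of a fuzzy ideal: for every $a,b\in S$, $f(ab)\geq f(a)$ and $f(ab)\geq f(b)$.

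First I would verify the fuzzy bi-ideal property. Let $x,y,z\in S$ be arbitrary. Apply the right-ideal inequality to the product $(xy)z$ to obtain $f((xy)z)\geq f(xy)$, and then the right-ideal inequality again to $xy$ to get $f(xy)\geq f(x)$; composing yields $f((xy)z)\geq f(x)$. On the other side, apply the left-ideal inequality to $(xy)z$ directly to get $f((xy)z)\geq f(z)$. Taking the minimum of these two bounds gives $f((xy)z)\geq f(x)\wedge f(z)$, which is the defining inequality of a fuzzy bi-ideal. (For completeness, the same sort of reasoning shows $f(ab)\geq f(a)\wedge f(b)$, so $f$ is in particular a fuzzy AG-subgroupoid.)

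Next I would verify the fuzzy interior-ideal property. Given $x,a,y\in S$, the right-ideal inequality applied to $(xa)y$ yields $f((xa)y)\geq f(xa)$, and the left-ideal inequality applied to $xa$ yields $f(xa)\geq f(a)$; chaining these gives $f((xa)y)\geq f(a)$, as required.

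There is no real obstacle here: both conclusions are essentially two-step chases through the left- and right-ideal inequalities, with no appeal to the left invertive law, the medial law, the existence of a left identity, or any of the product-based characterizations developed earlier. The only mild point to note is that the fuzzy AG-subgroupoid condition needed before invoking Lemmas 7 and 11 in the product form is automatic from the ideal hypothesis, so one could alternatively phrase the proof via $S\circ f\subseteq f$ and $f\circ S\subseteq f$ combined with those lemmas; I prefer the pointwise route since it is shorter and bypasses any such verification.
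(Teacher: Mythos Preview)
Your pointwise argument is correct: both the bi-ideal inequality $f((xy)z)\geq f(x)\wedge f(z)$ and the interior-ideal inequality $f((xa)y)\geq f(a)$ follow immediately by chaining the two defining inequalities of a fuzzy (two-sided) ideal, exactly as you wrote, and no structural hypotheses on $S$ beyond being a groupoid are needed.

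The paper takes the other route you mention at the end: it first observes $f\circ f\subseteq S\circ f\subseteq f$ to get that $f$ is a fuzzy AG-subgroupoid, then checks $(f\circ S)\circ f\subseteq f\circ f\subseteq f$ and $(S\circ f)\circ S\subseteq f\circ S\subseteq f$, invoking Lemmas~7 and~10 for the respective conclusions. So the paper works entirely with the product $\circ$ and the containment characterizations, whereas you work entirely at the level of elements. Your approach is shorter, self-contained, and makes transparent that nothing about the AG-groupoid identities is used; the paper's approach has the virtue of being uniform with the style of the surrounding results, which are all phrased in terms of $\circ$-containments, but it requires the reader to unwind three auxiliary lemmas for what is ultimately a two-line computation.
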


\begin{proof}
Let $f$ be a fuzzy ideal of an AG-groupoid $S$. Clearly $f$ is
AG-subgroupoid of $S$ by lemma 1, since $f\circ f\subseteq S\circ f\subseteq
f.$ Consider, $\left( f\circ S\right) \circ f\subseteq f\circ f\subseteq f,$
which by lemma 7 shows that $f$ is a bi-ideal in $S.$ Now, consider $\left(
S\circ f\right) \circ S\subseteq f\circ S\subseteq S,$ which by lemma 10
shows that $f$ is an interior ideal in $S$.
\end{proof}

\bigskip

A fuzzy ideal $f$ of an AG-groupoid $S$ is called fuzzy prime ideal if for
any two fuzzy ideals $g$ and $h$ of $S$, $g\circ h\subseteq f$ implies that
either $g\subseteq f$ or $h\subseteq f$. An AG-groupoid $S$ is fully fuzzy
prime if every fuzzy ideal is prime in $S.$ A fuzzy left ideal $f$ of an
AG-groupoid $S$ is called fuzzy quasi-prime if for any two fuzzy left ideals 
$g$ and $h$ of $S$, $g\circ h\subseteq f$ implies that either $g\subseteq f$
or $h\subseteq f$. An AG-groupoid $S$ is fully fuzzy quasi-prime if every
fuzzy left ideal is quasi-prime in $S.$A fuzzy left ideal $f$ of an
AG-groupoid $S$ is called fuzzy semiprime left ideal of $S$ if for any fuzzy
left ideal $g$ of $S$, $g^{2}\subseteq f$ implies $g\subseteq f.$

The collection of fuzzy subsets $F(S)$ of an AG-groupoid $S$ is totally
ordered if for all fuzzy ideals $f,g$ of $S$ either $f\subseteq g$, or $%
g\subseteq f$.

\begin{theorem}
An AG-groupoid $S$ with left identity is fully fuzzy prime if and only if
every fuzzy ideal is idempotent and fuzzy ideals are totally ordered.
\end{theorem}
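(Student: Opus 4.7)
The plan is to prove both implications by exploiting the interaction between idempotence, the operation $\circ$, and primeness, using Proposition 6 (which turns any one-sided ideal $f$ into an ideal $f\circ f$) and Lemma 2 (intersections of ideals are ideals) as the main tools. I will also repeatedly use the fact that for fuzzy ideals $f,g$ of $S$ we have $f\circ g\subseteq f\cap g$, since $g$ being left gives $f\circ g\subseteq S\circ g\subseteq g$ and $f$ being right gives $f\circ g\subseteq f\circ S\subseteq f$.

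For the forward direction, assume $S$ is fully fuzzy prime. To show every fuzzy ideal $f$ is idempotent, first note that $f\circ f\subseteq f$ (from $f$ being a right ideal), and that by Proposition 6 the product $f\circ f$ is itself a fuzzy ideal, hence prime by hypothesis. Applying primeness of $f\circ f$ to the trivial containment $f\circ f\subseteq f\circ f$ forces $f\subseteq f\circ f$, giving $f=f\circ f$. For total ordering, take any two fuzzy ideals $f,g$; by Lemma 2 the intersection $f\cap g$ is a fuzzy ideal, hence prime by hypothesis. Since $f\circ g\subseteq f\cap g$ as noted above, primeness of $f\cap g$ gives $f\subseteq f\cap g$ or $g\subseteq f\cap g$, i.e.\ $f\subseteq g$ or $g\subseteq f$.

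For the reverse direction, assume every fuzzy ideal is idempotent and that fuzzy ideals are totally ordered, and let $f$ be a fuzzy ideal with $g\circ h\subseteq f$ for fuzzy ideals $g,h$. By total ordering I may assume $g\subseteq h$ (the other case is symmetric). Then $g=g\circ g\subseteq g\circ h\subseteq f$ using idempotence of $g$, so $g\subseteq f$, which proves that $f$ is prime.

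The argument is essentially a bookkeeping exercise once the right auxiliary results are lined up, so there is no single hard step; the mild subtlety to watch is justifying the inclusion $f\circ g\subseteq f\cap g$ and ensuring I apply primeness to $f\circ f$ (rather than to $f$ itself) in the forward direction, since otherwise one cannot extract $f\subseteq f\circ f$ from $f\circ f\subseteq f$.
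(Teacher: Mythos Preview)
Your proof is correct and follows essentially the same route as the paper: both directions use Proposition~6 to make $f\circ f$ a (prime) ideal and extract $f\subseteq f\circ f$, use Lemma~2 together with $f\circ g\subseteq f\cap g$ to obtain total ordering, and use idempotence plus monotonicity of $\circ$ for the converse. Your write-up is in fact cleaner than the paper's, which contains a couple of evident typos in this proof.
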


\begin{proof}
Let $S$ is fully fuzzy prime. Let $f$ is fuzzy ideal of $S,$ then since $%
f\circ f\subseteq S\circ f\subseteq f.$ By proposition 6 $f\circ f$ is fuzzy
ideal and by hypothesis $f\circ f$ is prime. So, $f\circ f\subseteq f\circ f$%
, implies that $f\circ f\subseteq f$. Consider, $f$ and $g$ be fuzzy ideals
of $S,$ then $f\circ g\subseteq f\circ S\subseteq f$ and also $f\circ
g\subseteq S\circ g\subseteq g.$ Hence, $f\circ g\subseteq f\cap g$, where $%
f\cap g$ is fuzzy ideal by lemma 2. By definition of fully fuzzy prime $%
f\subseteq f\cap g$ or $g\subseteq f\cap g,$ which implies that $f\subseteq
g $ or $g\subseteq f.$ Conversely, let every fuzzy ideal of $S$ is
idempotent and fuzzy ideals are totally ordered.\ Let $f$ be any fuzzy ideal
of $S$ such that $g\circ h\subseteq f$ where $f$ and $g$ are fuzzy ideals of 
$S.$ Since ideals are totally ordered so for $g$ and $h$ either $g\subseteq h
$ or $h\subseteq g.$ Let $g\subseteq h.$ Since $g$ is idempotent so, $%
g=g\circ g\subseteq g\circ h\subseteq f.$ Similarly, for $h\subseteq g$ we
have, $f\subseteq g.$
\end{proof}

\begin{proposition}
Let $S$ be an AG-groupoid with left identity. If $S$ is fully fuzzy
quasi-prime then every fuzzy left ideal is idempotent.
\end{proposition}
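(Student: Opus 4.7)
The plan is to show $f \circ f = f$ for an arbitrary fuzzy left ideal $f$ of $S$ by establishing the two inclusions separately, with the nontrivial direction coming straight from the quasi-prime hypothesis applied in a self-referential way.

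First I would dispose of the easy inclusion $f \circ f \subseteq f$. Since $f$ is a fuzzy left ideal, Lemma 1(ii) gives $S \circ f \subseteq f$, and because every fuzzy subset of $S$ is contained in $S$, we have $f \circ f \subseteq S \circ f \subseteq f$. This step is routine and requires no structural hypothesis beyond $f$ being a fuzzy left ideal.

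For the reverse inclusion $f \subseteq f \circ f$, the key observation is that $f \circ f$ is itself a fuzzy (two-sided, hence left) ideal by Proposition 6, and therefore by the fully fuzzy quasi-prime hypothesis $f \circ f$ is a fuzzy quasi-prime left ideal of $S$. Now the trivial containment $f \circ f \subseteq f \circ f$, viewed as a product of the two fuzzy left ideals $f$ and $f$ lying inside the quasi-prime left ideal $f \circ f$, forces $f \subseteq f \circ f$ by the definition of fuzzy quasi-prime. Combining the two inclusions yields $f = f \circ f$, so $f$ is idempotent.

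The main conceptual step, and the only place where the hypothesis is really used, is the self-application of quasi-primeness to $f \circ f$; the rest is bookkeeping with Lemma 1 and Proposition 6, both of which have already been established. I do not anticipate a serious obstacle, as the argument is essentially a one-line consequence of applying the quasi-prime definition with $g = h = f$ against the ambient quasi-prime left ideal $f \circ f$.
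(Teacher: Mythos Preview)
Your proposal is correct and follows essentially the same route as the paper: establish $f\circ f\subseteq S\circ f\subseteq f$ directly, invoke Proposition~6 to see that $f\circ f$ is a fuzzy (left) ideal and hence quasi-prime, and then apply quasi-primeness to the tautology $f\circ f\subseteq f\circ f$ with $g=h=f$ to obtain $f\subseteq f\circ f$.
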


\begin{proof}
Let $f$ be a fuzzy left ideal in an AG-groupoid with left identity $S$,
where $S$ is fully fuzzy quasi-prime. Now, $S\circ f\subseteq f$ but $f\circ
f\subseteq S\circ f$ so $f\circ f\subseteq f.$ By proposition 6 $f\circ f$
is fuzzy ideal and by hypothesis $f\circ f$ is quasi-prime. So, $f\circ
f\subseteq f\circ f$, implies that $f\circ f\subseteq f$.
\end{proof}

\begin{theorem}
Let $S$ be an AG-groupoid with left identity. If $S$ is fully fuzzy
quasi-prime then for every fuzzy left ideals $f$ and $g$ of $S$, $f\circ
g=f\cap g.$
\end{theorem}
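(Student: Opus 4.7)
The plan is to prove the two inclusions $f\circ g\subseteq f\cap g$ and $f\cap g\subseteq f\circ g$ separately, leveraging the immediately preceding proposition to promote every fuzzy left ideal to an idempotent, and then promoting every idempotent fuzzy left ideal to a (two-sided) fuzzy ideal via the earlier proposition that states exactly this.

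For the inclusion $f\circ g\subseteq f\cap g$, first I would observe that under the full fuzzy quasi-primeness hypothesis together with the preceding proposition, both $f$ and $g$ are idempotent fuzzy left ideals, and therefore by the earlier proposition (every idempotent fuzzy left ideal is a fuzzy ideal) they are in fact fuzzy two-sided ideals. Then $f\circ g\subseteq f\circ S\subseteq f$ by Lemma 1(iii) applied to $f$, and $f\circ g\subseteq S\circ g\subseteq g$ by Lemma 1(ii) applied to $g$, which together give $f\circ g\subseteq f\cap g$.

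For the reverse inclusion $f\cap g\subseteq f\circ g$, the key observation is that $f\cap g$ is itself a fuzzy left ideal of $S$ by Lemma 2(ii). Since $S$ is fully fuzzy quasi-prime, the preceding proposition forces $f\cap g$ to be idempotent, so $f\cap g=(f\cap g)\circ(f\cap g)$. Finally, from $f\cap g\subseteq f$ and $f\cap g\subseteq g$ we obtain $(f\cap g)\circ(f\cap g)\subseteq f\circ g$ by monotonicity of $\circ$, which yields $f\cap g\subseteq f\circ g$.

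I expect no serious obstacle here: the content of the argument is essentially a repackaging of the previous proposition plus Lemma 2. The only step that requires a little care is recognizing that the hypothesis applies not only to $f$ and $g$ individually but also to their intersection, which is what makes the reverse inclusion collapse to a one-line idempotency computation; once that is noticed, both inclusions are routine.
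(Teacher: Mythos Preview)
Your argument is correct and follows the same overall architecture as the paper's: both inclusions, with Proposition~7 supplying idempotency and Lemma~2 handling $f\cap g$. The reverse inclusion $f\cap g\subseteq f\circ g$ is identical to the paper's.

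There is one minor divergence worth noting in the forward inclusion. To get $f\circ g\subseteq f$, you promote $f$ to a two-sided ideal via Proposition~4 (idempotent fuzzy left ideals are fuzzy ideals) and then use $f\circ g\subseteq f\circ S\subseteq f$. The paper instead observes directly that $g\circ f\subseteq S\circ f\subseteq f$ and $f\circ g\subseteq S\circ g\subseteq g$, and then shows $f\circ g=g\circ f$ by the computation $g\circ f=(g\circ g)\circ(f\circ f)=(f\circ f)\circ(g\circ g)=f\circ g$ using Theorem~1(ii). Your route is arguably cleaner, since it avoids this commutativity step entirely; the paper's route has the side benefit of exhibiting $f\circ g=g\circ f$ explicitly, which is what underlies the semilattice corollary that follows. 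Either way, the substance is the same and both depend essentially on Proposition~7.
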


\begin{proof}
Let $f$ and $g$ are fuzzy left ideals in an AG-groupoid with left identity $%
S $, where $S$ is fully fuzzy quasi-prime. So, $S\circ f\subseteq f$ and $%
S\circ g\subseteq g,$ which implies that $g\circ f\subseteq f$ and $f\circ
g\subseteq g.$ By proposition 7 and theorem 1, $g\circ f=(g\circ g)\circ
\left( f\circ f\right) =\left( f\circ f\right) \circ \left( g\circ g\right)
=f\circ g,$ which implies that $f\circ g\subseteq f\cap g.$ Since $f\cap
g\subseteq f$ and $f\cap g\subseteq g$ so, $\left( f\cap g\right) \circ
\left( f\cap g\right) \subseteq f\circ g.$ By lemma 2 and proposition 7, $%
f\cap g$ is fuzzy left ideal and hence idempotent. So, $f\cap g\subseteq
f\circ g$ which implies that $f\cap g=f\circ g.$
\end{proof}

\begin{corollary}
The set of fuzzy quasi-prime ideals of AG-groupoid with left identity forms
a semillattice structure.
\end{corollary}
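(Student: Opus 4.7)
The plan is to leverage Theorem 5 directly. I would read the corollary as inheriting the hypothesis of the preceding theorem, so $S$ is an AG-groupoid with left identity that is fully fuzzy quasi-prime; in that setting every fuzzy left ideal is by definition quasi-prime, so the ``set of fuzzy quasi-prime ideals'' coincides with the full collection of fuzzy left ideals of $S$. Theorem 5 then tells us that on this set the operation $\circ$ agrees with $\cap$, and this identification will be the workhorse of the argument.

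With that identification in hand, I would verify the semilattice axioms one at a time. For \textbf{closure}, I would note that for fuzzy quasi-prime left ideals $f$ and $g$, Lemma 2 says $f \cap g$ is itself a fuzzy left ideal, and the fully fuzzy quasi-prime hypothesis promotes it to a quasi-prime one; since $f \circ g = f \cap g$ by Theorem 5, the operation $\circ$ stays inside the set. \textbf{Commutativity} is immediate: $f \circ g = f \cap g = g \cap f = g \circ f$. \textbf{Associativity} transfers from the associativity of $\cap$ via the same identification. Finally, for \textbf{idempotency} I would invoke Proposition 8, which under the fully fuzzy quasi-prime hypothesis guarantees $f \circ f = f$ for every fuzzy left ideal $f$.

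I do not expect any real obstacle; the corollary is packaging rather than genuinely new content. The one point worth flagging is interpretive: without the fully fuzzy quasi-prime hypothesis carried from Theorem 5, closure of the set of quasi-prime ideals under $\circ$ (equivalently, under $\cap$) would require a separate argument and is not true in general, since intersections of prime-style ideals need not be prime. So the main thing I would be careful about in writing the proof is making that inherited hypothesis explicit at the start, and then simply chaining Lemma 2, Theorem 5, and Proposition 8 to read off the four semilattice axioms.
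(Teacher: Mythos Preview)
Your proposal is correct and is precisely the expanded version of what the paper intends: the paper's own proof reads simply ``It is an easy consequence,'' and your plan of inheriting the fully fuzzy quasi-prime hypothesis from the preceding theorem and then chaining Lemma~2, that theorem ($f\circ g=f\cap g$), and the proposition guaranteeing idempotency of every fuzzy left ideal is exactly that consequence made explicit. (Your label numbers are off by a little relative to the paper's running counters, but the cited results are the right ones.)
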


\begin{proof}
It is an easy consequence.
\end{proof}

\begin{theorem}
Let $S$ be an AG-groupoid with left identity, then followings are equivalent:

(i) each left ideal of $S$ is idempotent,

(ii) each fuzzy left ideal of $S$ is idempotent,

(iii) for each pair of fuzzy left ideals $f,\;g$ of $S,$ $f\circ g=f\cap g,$

(iv) each fuzzy left ideal of $S$ is fuzzy semiprime left ideal.
\end{theorem}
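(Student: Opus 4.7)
The plan is to prove the cycle (i) $\Rightarrow$ (ii) $\Rightarrow$ (iii) $\Rightarrow$ (iv) $\Rightarrow$ (i). For (i) $\Rightarrow$ (ii), I would pass through level sets: given a fuzzy left ideal $f$, each nonempty level set $f_{t}=\{x\in S:f(x)\geq t\}$ is a crisp left ideal (a direct check from $f(ab)\geq f(b)$), hence idempotent by (i). The inclusion $f\circ f\subseteq f$ is immediate from Lemma 1 since $f\circ f\subseteq S\circ f\subseteq f$, and for the reverse, given $x$ with $f(x)=t>0$, writing $x=ab$ with $a,b\in f_{t}=f_{t}\cdot f_{t}$ gives $(f\circ f)(x)\geq f(a)\wedge f(b)\geq t=f(x)$.

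For (ii) $\Rightarrow$ (iii) I would essentially rerun the calculation used in Theorem 4: given fuzzy left ideals $f,g$, we have $f\circ g\subseteq S\circ g\subseteq g$ and $g\circ f\subseteq S\circ f\subseteq f$, and Theorem 1(ii) together with the idempotence hypothesis yields $f\circ g=(f\circ f)\circ(g\circ g)=(g\circ g)\circ(f\circ f)=g\circ f$, so $f\circ g\subseteq f\cap g$. The reverse inclusion uses Lemma 2 to see that $f\cap g$ is a fuzzy left ideal, hence idempotent by (ii), giving $f\cap g=(f\cap g)\circ(f\cap g)\subseteq f\circ g$. The implication (iii) $\Rightarrow$ (iv) is then immediate: if $g$ is a fuzzy left ideal with $g\circ g\subseteq f$ for some fuzzy left ideal $f$, then by (iii), $g=g\cap g=g\circ g\subseteq f$, so $f$ is fuzzy semiprime.

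The most delicate step will be (iv) $\Rightarrow$ (i). For a crisp left ideal $A$ we get $AA\subseteq SA\subseteq A$ for free, and the issue is the reverse inclusion $A\subseteq AA$. I would pass through characteristic functions: $C_{A}$ is a fuzzy left ideal, so by Proposition 6 its square $C_{A}\circ C_{A}=C_{AA}$ is a fuzzy two-sided ideal, in particular a fuzzy left ideal. By (iv), $C_{AA}$ is fuzzy semiprime; applying the semiprime property with $g=C_{A}$, for which $g\circ g=C_{AA}\subseteq C_{AA}$ holds trivially, yields $C_{A}\subseteq C_{AA}$, which translates to the crisp inclusion $A\subseteq AA$. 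The main obstacle is precisely this crossing from the fuzzy hypothesis back to a crisp equality, and the key ingredient is that Proposition 6 promotes $C_{AA}$ to a fuzzy left ideal in its own right, which is exactly what one needs in order to invoke the semiprime property with $C_{A}$ itself as the test ideal.
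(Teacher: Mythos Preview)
Your argument is correct and matches the paper's proof closely: the steps (ii) $\Rightarrow$ (iii) $\Rightarrow$ (iv) are essentially the same, and your (iv) $\Rightarrow$ (i) is the characteristic-function specialization of the paper's (iv) $\Rightarrow$ (ii), which likewise promotes $f\circ f$ to a fuzzy left ideal and applies the semiprime condition with $f$ itself as the test ideal. The only real difference is that you spell out (i) $\Rightarrow$ (ii) via level sets, whereas the paper simply cites (i) $\Leftrightarrow$ (ii) from reference~[9].
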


\begin{proof}
\textit{(i)}$\Leftrightarrow $\textit{(ii)} from [9].

Let \textit{(ii)} holds that is every fuzzy left ideal of $S$ is idempotent.
Let $f$ and $g$ are fuzzy left ideals which are idempotent, then $f\circ
g\subseteq g$ and $g\circ f\subseteq f.$ But $f\circ g=(f\circ f)\circ
g=\left( g\circ f\right) \circ f\subseteq f\circ f\subseteq f$ and so $%
f\circ g\subseteq f\cap g.$ Also $f\cap g\subseteq f$ and $f\cap g\subseteq
g $ which implies that $\left( f\cap g\right) \circ \left( f\cap g\right)
\subseteq f\circ g.$ We know that intersection of fuzzy left ideals $f$ and $%
g$ is fuzzy left ideal so $f\cap g$ is fuzzy left ideal of $S$ and hence an
idempotent, which implies that $f\cap g\subseteq f\circ g.$

\textit{(iii)}$\Rightarrow $\textit{(iv)} is same as in [9] using the
hypothesis for fuzzy left ideal $f$ in $S$ that $f\circ f=f\cap f=f.$

Let \textit{(iv) }holds that each fuzzy left ideal $f$ is semiprime in $S$.
By use of theorem 1 and lemma 4, $S\circ (f\circ f)=f\circ (S\circ f)=f\circ
f$ that is $f\circ f$ is fuzzy left ideal of $S$ and hence semiprime. Since $%
f\circ f\subseteq f\circ f$ which implies that $f\subseteq f\circ f$. But we
know that $f$ is fuzzy left ideal so $f\circ f\subseteq S\circ f\subseteq f.$
\end{proof}

\end{document}